\newtheorem{theorem}{Theorem}[section]
\newtheorem{lemma}[theorem]{Lemma}
\newtheorem{proposition}[theorem]{Proposition}
\newtheorem{remark}[theorem]{Remark}
\newtheorem{maintheorem}{Theorem}
\DeclareMathOperator*{\argmax}{arg\,max}
\def\it{\textit}
\newcommand{\E}{{\mathbb{E}}}
\newcommand{\1}{\mathds{1}}
\newcommand{\tmix}{t_{\rm{mix}}}
\newcommand{\R}{\mathbb{R}}
\newcommand{\norm}[1]{\left\lVert#1\right\rVert}
\newcommand{\e}{\varepsilon}
	\renewcommand{\P}{\mathbb{P}}
\newcommand{\cE}{\mathcal{E}}
\newcommand{\cG}{\mathcal{G}}
\newcommand{\cL}{\mathcal{L}}
\newcommand{\cP}{\mathcal{P}}
\newcommand{\cT}{\mathcal{T}}
\newcommand{\sP}{\mathscr{P}}
  \renewcommand{\vec}[1]{\boldsymbol{#1}}
\renewcommand{\setminus}{\backslash}
\begin{document}

\title[Sub-critical Exponential random graphs]{Sub-critical Exponential random graphs: concentration of measure and some applications}

\author{Shirshendu Ganguly and Kyeongsik Nam}

\begin{abstract}
The exponential random graph model (ERGM) is a central object in the study of clustering properties in social networks as well as canonical ensembles in statistical physics.
Despite some breakthrough works in the mathematical understanding of ERGM, most notably in \cite{bbs}, through the analysis of a natural Heat-bath Glauber dynamics and in \cite{cd, e, eg}, via a large deviation theoretic perspective, several basic questions have remained unanswered owing to the lack of exact solvability unlike the much studied Curie-Weiss model (Ising model on the complete graph). In this paper, we  establish a series of new concentration of measure results for the ERGM \emph{throughout the entire sub-critical phase}, including a Poincar\'e inequality, Gaussian concentration for Lipschitz functions, and a central limit theorem.  In addition,  a new proof of a quantitative bound on the $W_1-$Wasserstein distance to Erd\H{o}s-R\'enyi graphs, previously obtained in \cite{rr}, is also presented. The arguments rely on translating temporal mixing properties of Glauber dynamics to static spatial mixing properties of the equilibrium measure and have the potential of being useful in proving similar functional inequalities for other Gibbsian systems beyond the perturbative regime.
\end{abstract}

\address{ Department of Statistics, Evans Hall, University of California, Berkeley, CA
94720, USA} 

\email{sganguly@berkeley.edu }

\address{ Department of Mathematics, University of California, Los Angeles, CA
90095, USA} 

\email{ksnam@math.ucla.edu }

 \subjclass[2010]{05C80, 	60F05,	60J05, 	62F12, 82B20, 	82C20}

 \keywords{Exponential random graphs, Erd\H{o}s-R\'enyi graph, Glauber dynamics, mixing of Markov chains, Poincar\'e inequality, concentration, central limit theorem, Stein's method, Wasserstein distance.} 
 
\maketitle

\tableofcontents

\section{Introduction}
A central object in the study of statistical models on networks, is the notion of a Gibbs measure on graphs. In the most general setting, the probability of a graph $G$ on $n$ vertices, thought of naturally as $x\in  \{0,1\}^{n(n-1)/2}$ is proportional to $e^{\beta f(x)}$ where $f(\cdot)$ is the Hamiltonian and $\beta$ is a parameter classically referred to as the inverse temperature. 

A particularly important subclass of such measures, capturing clustering properties, is obtained when the Hamiltonian is given by counts of subgraphs of interest, such as triangles. This is termed in the literature as the \textit{Exponential Random Graph model} (ERGM).  Thus more precisely, for $x\in \{0,1\}^{n(n-1)/2}$, where the configuration space is the set of all graphs on the vertex set $\{1,\cdots,n\}$, defining $N_G(x)$ {to} be the number of labeled subgraphs $G$ in $x$, given a vector $\vec \beta=(\beta_1,\ldots, \beta_s),$ the ERGM Gibbs measure is defined as
\begin{align} \label{gibbs}
{\pi}(x) \sim \exp \Big(\sum_{i=1}^s \beta_i \frac{N_{G_i}(x)}{n^{|V(G_i)|-2}}\Big),
\end{align} 
(see Section \ref{section 1.2} for details)
and hence is a version of the well known Erd\H{o}s-R\'enyi graphs, obtained by \it{tilting} according to the subgraph counting Hamiltonian. Being mostly used for modeling relational networks in sociology, there is a significant amount of rigorous and non-rigorous literature on ERGM, see e.g. \cite{fs,hl, pn1, pn2} for some specific cases, while \cite{cd2} verified a mean-field
approximation for some values of $\vec \beta,$ where the ERGM behaves qualitatively like an Erd\H{o}s-R\'enyi graph. There is also a series of works studying constrained ERGM models, as an important example of micro-canonical ensembles in statistical physics (see e.g. \cite{kenyon} and the references therein). 
A characteristic property of the ERGM is the well known mean field behavior, which informally means that it can be approximated in an information theoretic sense by product measures. 
Unfortunately, contrary to classical spin systems and lattice gases, a detailed analysis of  general  ERGM has been out of reach so far. Thus, while a lot of refined results on fluctuation theory and concentration properties, have been established over the years, for the exactly solvable Curie-Weiss model, (Ising model on the complete graph), corresponding questions for the ERGM remain largely open. 

However a significant breakthrough was made in \cite{bbs}, who studied a natural Heat-bath Glauber dynamics on $\cG_n$ with ferromagnetic ERGM as the invariant measure, and established precise estimates on convergence to equilibrium as well as closeness to an Erd\H{o}s-R\'enyi measure of appropriate density as a function of $\vec \beta.$

Soon after, in a landmark development Chatterjee and Diaconis \cite{cd} introduced a large deviation theoretic approach to the study of ERGM. Among many things, one of the key achievements of this work, is a variational formula for the free energy of ERGM, using the large deviation theory for Erd\H{o}s-R\'enyi graphs, developed in prior work \cite{cv} relying on the theory of graph limits developed by Lov\'asz and coauthors \cite{lov}.  Analyzing the formula, it was also established that ERGM for certain values of $\beta$ behaves qualitatively like an Erd\H{o}s-R\'enyi graph (in an entropy theoretic sense) in the thermodynamic limit. More recently, a more refined result was established by Eldan \cite{e}, who obtained a representation of the ERGM as a low entropy mixture of product measures, using the framework of nonlinear large deviation theory developed in the same paper, extending the theory put forward in \cite{CD1}. 

Recently, the results in \cite{bbs} have been extended to what are known as vertex weighted exponential random graphs in \cite{dey}. We refer to \cite{eg,ry,sr,yz} for more results on ERGM and \cite{c2} for a beautiful exposition of the recent developments around the general theory of large deviations for random graphs.

In spite of the above developments, the understanding of the ERGM was still not refined enough to treat important and delicate questions about concentration of measure properties and central limit theorems with the exception of the perturbative, very high temperature regime, popularly known as \it{Dobrushin's uniqueness (DU) regime} where \cite{ss} showed that the ERGM behaves qualitatively like a product measure, satisfying the Log-Sobolev inequality (LSI) and as a consequence, strong concentration properties.

However, going beyond perturbative ideas, while for classical spin systems and related percolation models, there has been significant progress in understanding probabilistic aspects in the various phases of temperature, analogous questions for non-exactly solvable mean-field models like the ERGM, about spectral gap, Log-Sobolev inequality, concentration of measure, and central limit theorems are largely open.

To elaborate on a concrete difficulty one faces, let us, for example, consider the concentration phenomenon for the Ising model on the lattice, which can be obtained  by a martingale argument or coercivity properties such as Poincar\'e and Log-Sobolev inequality. The validity of these conditions require a  spatial mixing property which has been verified in some cases throughout the entire high temperature phase, on the lattice (see \cite{bkmp} for the related results for the Ising model on general graphs).  In fact, it is known that  a certain  mixing condition, well known as the  \it{Dobrushin-Shlosman mixing condition}, is equivalent to the uniform boundedness of the Log-Sobolev constant (see \cite{mo1,mo2,sz0,sz} for details). However, while spatial mixing properties have been an object  of intense study for finite range spin systems, such properties cannot be expected for the ERGM owing to the natural symmetry, exchangeability and the mean field nature of the model. 

In this paper, we establish a series of new results for the ERGM which hold throughout the \it{whole high temperature regime}. The main result establishes  Gaussian concentration for Lipschitz functions, and as applications we prove a central limit theorem (CLT) for a partial number of edges as well as a quantitative result about how close the high temperature ERGM is to an Erd\H{o}s-R\'enyi graph. We also establish a sharp Poincar\'e  inequality.  Most of the results in the paper are the first of their kind beyond the perturbative regime for the ERGM. We also include a discussion about potential applications of some of the results in this paper to problems in statistics about estimability of parameters in an ERGM, which along with the CLT result, can lead to progress in the analysis of natural hypothesis testing problems on networks. 

The key ingredient we rely on is a temporal mixing result along the Heat-bath Glauber dynamics derived from \cite{bbs}. Thus the main theme in this paper is how such estimates can be translated into mixing properties of the equilibrium measure which we expect would be of general interest particularly in analyzing other related Gibbs measures.\\

We now move on to the precise forms of the main results and start by developing the necessary notation.
\subsection{Configuration space and notations}
For any graph $G$, we will use $V(G)$ and $E(G)$ to denote the vertex and edge sets, respectively. 
Let $\mathcal{G}_n$ be the set of all graphs with  vertex set $[n]:=\{1,2,\cdots,n\}$. In particular, let $K_n$ be a complete graph with a vertex set $[n]$. Adopting a widely used notation, an element in $\mathcal{G}_n$ will be denoted by $x=(x_e)_{e\in E(K_n)}$ with  $x_e=1$ if the (undirected) edge $e=(i,j)$ is present and $x_e=0$ otherwise, i.e., we will identify  $\mathcal{G}_n$ with the hypercube $\{0,1\}^M$, where $M:=\frac{n(n-1)}{2}$.

For $x\in \mathcal{G}_n$, let $E(x)$  be the set of edges $e$ with $x_e=1$. Define $\bar{x}_e=(x_f)_{f\neq e}$  to be the collection of all coordinates of $x$ except for the edge $e$. Also, denote $x_{e+}$ and $x_{e-}$ to be configurations whose edge sets are $E(x) \cup e$ and $E(x) \setminus e$, respectively. {Further, let $x_{ef,+}$ be a configuration whose edge set is $E(x)\cup e\cup f$.}  Finally, for any function $\varphi:\mathcal{G}_n \rightarrow \R$, let 
\begin{align}\label{discretederivatives1}
\partial_e \varphi(x) &:=\varphi(x_{e+})- \varphi(x_{e-}),\\ 
\nonumber
\partial_{ef} \varphi(x)&:= \partial_e \varphi(x_{f+})-\partial_e \varphi(x_{f-})
\end{align}
denote the discrete first and second derivatives.

We now define a natural partial ordering and metric on $\mathcal{G}_n$.  We say $x\leq y,$ if and only if $x_e\leq y_e$ for all edges $e$. For $x,y\in \mathcal{G}_n$, define $x \wedge y:=(x_e \wedge y_e)_{e\in E(K_n)}$ with $x_e \wedge y_e = \min\{x_e,y_e\}$ and $x \vee y :=(x_e \vee y _e)_{e\in E(K_n)}$ with $x_e \vee y _e=\max \{x_e,y_e\}$. We will use $d_H$ to denote the Hamming distance  on $\mathcal{G}_n$:  
\begin{equation}\label{hammingdistance}
d_H(x,y):=\sum_{i=1}^M |x_i-y_i|.
\end{equation}

Next, we discuss the crucial notion of the \emph{subgraph counting number}.
For any graph $G$, let $N_G(x)$ be the number of copies of the graph $G$ in the configuration $x$, multiplied by the number of automorphisms of $G$. More precisely:
\begin{equation}\label{homomorphism}
N_G(x)=\sum_{\psi}\prod_{(v,w)\in E(G)}x_{(\psi(v),\psi(w))},
\end{equation}
where the sum is over all injective function $\psi:V(G)\rightarrow [n].$ 
For instance, if $G$ is a triangle, then $N_G(x)$ is the number of \emph{labeled triangles} in $x$.
{For our purposes, we would also need slight variants of the above definition. For a configuration $x$ and an edge $e=(i_1,i_2)$, let $\hat{x}:=x_{e,+}$ and define,
\begin{align} \label{111}
N_G(x,e):=\sum_{\psi}\prod_{(v,w)\in E(G)}\hat{x}_{(\psi(v),\psi(w))},
\end{align} 
where
the sum  is over  all injective function $\psi:V(G)\rightarrow [n]$   satisfying $i_1,i_2\in \psi(V(G))$ and $ (\psi^{-1}(i_1),\psi^{-1}(i_2))\in E(G)$, i.e., $N_G(x,e)$ counts all embeddings of {$G$} into $x_{e,+},$ where some edge of $G$ maps to $e.$
Likewise, for a configuration $x$ and two distinct edges $e=(i_1,i_2)$ and $f=(i_3,i_4)$, letting {$\tilde{x}=x_{ef,+}$,} define,
\begin{align} \label{112}
N_G(x,e,f):=\sum_{\psi}\prod_{(v,w)\in E(G)}\tilde{x}_{(\psi(v),\psi(w))},
\end{align}
where the sum  is again over all injective function $\psi:V(G)\rightarrow [n]$  satisfying $ i_1,\cdots,i_4\in \psi(V(G))$ and $(\psi^{-1}(i_1),\psi^{-1}(i_2)),(\psi^{-1}(i_3),\psi^{-1}(i_4))\in E(G)$. Note that above, $e$ and $f$ are allowed to share a vertex. For $e=f$, set $N_G(x,e,f):=0$. {The latter convention is rather natural because of the following reason.  For the Hamiltonian $H$, defined shortly in \eqref{defhamil}, which will be  considered throughout the paper, for $e\neq f,$ $\partial_{ef}H(x)$ has a natural expression in terms of $ N_{G_i}(x,e,f) $ (see e.g.,\eqref{201}). Since by definition $\partial_{ee}H(x) = 0$, we adopt the convention of defining $N_G(x,e,e):=0$ so that \eqref{201} holds for all $e,f$.}

Letting $\mathcal{P}(\mathcal{G}_n)$ be the collection of probability measures on $\mathcal{G}_n$, for any $\mu\in \mathcal{P}(\mathcal{G}_n)$, $\mu_e(\cdot|x)$ denotes the conditional distribution of the edge $e$ given $\bar{x}_e$. Finally, for any $\mu,\nu \in \mathcal{P}(\mathcal{G}_n)$, let 
 
\begin{equation}\label{totalvariation} 
 d_{TV}(\mu,\nu):=\sup_{A \subset \{0,1\}^M} { |\mu(A)-\nu(A)| }, 
 \end{equation}
denote the total variation distance between $\mu$ and $\nu$, respectively.

\subsection{The Hamiltonian and the Gibbs measure} \label{section 1.2}
We now arrive at the definition of the Gibbs measure on $\mathcal{G}_n$, popularly known as the \it{exponential random graph model} (ERGM), which is the central object of study in this article. Fix $s$ many 
graphs $G_1,\cdots,G_s$ with $|V_i|:=|V(G_i)|$ and $|E_i|:=|E(G_i)|$. Let $a$ be a positive integer satisfying  $|V_i|\leq a$ for $i=1,\cdots,s$.  We define the Hamiltonian $H$  on $\mathcal{G}_n$ by
\begin{align}\label{defhamil}
H(x)=\sum_{i=1}^s \beta_i \frac{N_{G_i}(x)}{n^{|V_i|-2}},
\end{align}
where $\beta_i$ are certain parameters which will be encoded by the vector $\vec{\beta}$.
 Note that the {subgraph} count $N_{G_i}(x)$ is of order $n^{|V_i|}$; thus, the normalization $n^{|V_i|-2}$ ensures that the Hamiltonian is of order $n^2$, {which turns out to be the right scaling for the model.} 
 Finally, the $\mathrm{ERGM}(\vec{\beta})$ (often to be called as just the ERGM) is the Gibbs measure $\pi\in \mathcal{P}(\mathcal{G}_n)$ defined by
\begin{align} \label{erg}
\pi(x) = \frac{1}{Z_n(\vec{\beta})}\exp \Big(\sum_{i=1}^s \beta_i \frac{N_{G_i}(x)}{n^{|V_i|-2}}\Big),
\end{align}
where $Z_n(\vec{\beta})$ is the normalizing constant and the quantity ${f_n(\vec{\beta})}:=\frac{\log Z_n(\vec{\beta})}{{n^2}},$ will be called the \emph{normalized free energy}.
 Throughout the paper, we only consider the case when $\beta_i>0$ for $i= 1,\cdots,s$ so that the Gibbs measure \eqref{erg} is monotone and ferromagnetic. 

\subsection{Glauber dynamics} \label{section 1.3}
 There is a natural (discrete time) Heat-bath Glauber dynamics {(GD)} associated with the ERGM \eqref{erg}, which is defined as follows.
Given the current state $x$, an edge $e$ is  uniformly chosen and resampled according to the conditional distribution of $x_e$ given $\bar x_e$.
It is an easy calculation to verify that given $e$ is chosen to be updated, one has the following transition probabilities: 
\begin{align}\label{tranrates}
P(x,x_{e+})=\frac{\exp(\partial_e H(x))}{1+\exp(\partial_e H(x))} ; P(x,x_{e-})=\frac{1}{1+\exp(\partial_e H(x))},
\end{align}
and as is well known for Glauber dynamics, $\pi$ is reversible with respect to the transition kernel.
For $x\in \mathcal{G}_n$ and edge $e$, let $x^e$ be the configuration $x$ with edge $e$ flipped (i.e. open  $\leftrightarrow$ closed), and define $c(x,e)=P(x,x^e)$  according to \eqref{tranrates}. Then, the
 generator of  Glauber dynamics $\cL$ is defined by
\begin{align}\label{generator}
\mathcal{L} f(x)= \frac{1}{M} \sum_e c(x,e) (f(x^e)-f(x)),
\end{align} 
and the Dirichlet form $\mathcal{E}$  is given by
\begin{align}\label{dirichlet}
\mathcal{E}(f,g):= \frac{1}{M} \sum_{x\in \mathcal{G}_n} \sum_{e} c(x,e) (f(x^e)-f(x)) (g(x^e)-g(x))\pi(x).
\end{align}
Also, for $t\geq 0$, the semigroup  generated by $\mathcal{L}$ will be denoted by:
\begin{align*}
  P_t:=e^{t\mathcal{L}}.
\end{align*}

We next introduce the \it{grand coupling}, which provides a natural coupling between the GD, $(\{Z^x(t)\}_{x\in \mathcal{G}_n})_{t\geq 0},$ starting from all initial configurations $x$ and is often an useful tool to analyze Markov chains.  

{Because of its appearance throughout the paper, it would be convenient to first let 
\begin{equation}\label{impfunction}
\Phi(x) := \frac{e^x}{1+e^x}.
\end{equation}
} Then, for $I$, a uniformly chosen edge, and $U$  a uniform random variable on $[0,1]$ independent of $I$,  for $x\in \cG_n,$ define,
\begin{align}
S^x:=\begin{cases}
1 \quad \text{if} \ 0<U\leq  { \Phi(  \partial_I H(x)  ) }   ,\\
0 \quad \text{if} \   { \Phi(  \partial_I H(x)  ) }      <U\leq 1,
\end{cases}
\end{align}
and proceed to define the next state $Z^x(1),$ as 
\begin{align*}
Z^x(1)=\begin{cases}
x_e \quad \text{if} \  e\neq I, \\
S^x \quad \text{if} \ e=I.
\end{cases}
\end{align*}
The above described one step of the grand coupling can be repeated in a Markovian fashion to obtain the full grand coupling. We will also denote the full and empty configurations by $+$ and $-$ respectively, and $Z^+(t)$ and $Z^-(t)$ to denote the GD started from them.
A particularly useful property of the grand coupling is that it is monotone, i.e., the natural partial order on the configuration space defined earlier is preserved in time under the coupling and hence we will interchangeably refer to this as the \it{monotone coupling}.
Also for any measure $\mu \in \cP(\cG_n),$ we will denote the distribution of the GD at time $t$ starting from $\mu$ by $\mu_t.$ Finally we denote the $\e-$mixing time by 
$$
\tmix(\e):=\inf\{t: d_{TV}(Z^x(t),\pi)\le \e \,\, \forall x\in \cG_n \}.$$

\subsection{High and low temperature phases} To motivate the objects of interest in this paper, distinguish the high and low temperature behaviors and put our results in context, it will be useful to review briefly the prior advances in the study of ERGM. Broadly the two main breakthroughs in this field rely on two different perspectives, large deviation theoretic and the study of GD. Using the former approach, in a seminal work, 
the thermodynamic behavior of ERGM was studied by Chatterjee and Diaconis \cite{cd} who obtained,
a precise variational formula for the free energy of ERGM, $f_n(\vec{\beta})$, via the theory of \it{graphons} which are measurable and symmetric functions $f:[0,1]^2 \rightarrow [0,1]$. 
More formally, generalizing the notion of Gibbs measures on graphs, denoting the space of all graphons modulo composition by measure preserving transformation of $[0,1]$ to itself, by $\widetilde W$,  for a function $T:\widetilde{W} \rightarrow \R$, one can consider the probability distribution $p_n$ on $\mathcal{G}_n$ defined by
\begin{align*}
p_n(G)=\frac{1}{Z_n}e^{n^2T(\tilde{G})},
\end{align*}
where $\tilde G$ is a canonical embedding of $G$ into $\widetilde W,$ and $Z_n$ is the corresponding normalizing constant.
Given the above setting, using a previous result of Chatterjee and Varadhan  \cite{cv} who established a large deviation principle on $\widetilde{W}$, for the sequence of measures  induced by the Erd\H{o}s-R\'enyi $G(n,p)$ graph for fixed $p>0$ (dense case) as $n\to \infty$,
Chatterjee and Diaconis  \cite{cd} showed that,
\begin{align} \label{variational 0}
\lim_{n\rightarrow \infty} \frac{\log Z_n}{{n^2}} = \sup_{\tilde{h}\in \widetilde{W}}\Big(T(\tilde{h})-\frac{1}{2}I(\tilde{h})\Big),
\end{align}
where for any $x\in [0,1],$ we let $$I(x):=-[x\log x+(1-x)\log (1-x)]$$ to be the binary-entropy functional, while for any graphon $W,$ $I(W)$ is simply $\int_{[0,1]^2}I(W(x,y))dxdy $.\\

\noindent
\textbf{Replica-Symmetry:}
In particular, they proved that
the ferromagnetic case $\beta_1,\cdots,\beta_s >0$ falls in the replica symmetric regime i.e., the maximizers  in \eqref{variational 0} are given by the  constant functions, i.e.,
\begin{align} \label{variational1}
\lim_{n\rightarrow \infty}  \frac{1}{n^2}\log Z_n(\vec{\beta}) = \sup_{u\in [0,1]} \Big(\sum_{i=1}^s \beta_i u^{|E_i|} - \frac{1}{2}I(u)\Big).
\end{align}
Informally, the above implies that if  $u_1,\cdots,u_k\in [0,1]$ are the maximizers in \eqref{variational1}, then the ERGM \eqref{erg} behaves like a mixture of the Erd\H{o}s-R\'enyi graphs $G(n,u_i)$'s, in an asymptotic sense. This was made precise in the more recent work by Eldan \cite{e}, and Eldan and Gross \cite{eg}.

To understand the solutions of \eqref{variational1}, recalling the function $\Phi$ from \eqref{impfunction}, for $p>0$, define two functions $\Psi_{\vec{\beta}}$ and $\varphi_{\vec{\beta}}$ by
\begin{align} \label{phi}
\Psi_{\vec{\beta}}(p):=\sum_{i=1}^s 2\beta_i |E_i|  p^{|E_i|-1},\quad \varphi_{\vec{\beta}}(p):= { \Phi(\Psi_{\vec{\beta}}(p)) }=  \frac{\exp(\Psi_{\vec{\beta}}(p))}{1+\exp(\Psi_{\vec{\beta}}(p))}.
\end{align}
{It is not difficult to check that both the above functions are increasing in $p.$ Note that by computing the gradient of the RHS in \eqref{variational1}, all the maximizers of the same, satisfy 
\begin{equation}\label{firstorder}
\varphi_{\vec{\beta}}(p)=p.
\end{equation}
 However, while the solutions of the RHS in \eqref{variational1} represent the constant graphons which account for the dominant contribution to the free energy, there might be several solutions to \eqref{firstorder}, representing more local maxima. It is not difficult to see that the second order condition associated to a $p$ corresponding to a local maxima for \eqref{variational1} is 
 \begin{align}\label{secondorder}
 \varphi_{\vec{\beta}}'(p^*)<1.
 \end{align}  
In fact such states have a natural interpretation in terms of the GD. This perspective was adopted in another major advancement in this field in \cite{bbs} where the authors  pursued an alternate approach of understanding ERGM through studying convergence to equilibrium for the GD.  
Following \cite{bbs}, we say that $\vec{\beta}$ belongs to the  \it{high temperature phase} or is \it{subcritical} if $\varphi_{\vec{\beta}}(p)=p$ has a unique solution $p^*$ and,  which in addition satisfies $\varphi_{\vec{\beta}}'(p^*)<1$. 
Thus in the subcritical phase the mixture of Erd\H{o}s-R\'enyi graphs $G(n,u_i)$ degenerates to a single  Erd\H{o}s-R\'enyi graph $G(n,p^*).$  
Whereas,  $\vec{\beta}$ is said to be in the \it{low temperature phase} or is \it{supercritical}, if $\varphi_{\vec{\beta}}(p)=p$ has at least two solutions $p^*$ with $\varphi_{\vec{\beta}}'(p^*)<1$. 
When $\vec{\beta}$ is neither in the high nor low temperature phase, is called the \it{critical temperature phase}. It was shown in \cite{bbs} that GD is rapidly mixing if $\vec{\beta}$ is subcritical, and is slowly mixing if it is supercritical instead. 
More generally, it is not difficult to, at least heuristically, argue that any local maxima, i.e., $p$ satisfying \eqref{firstorder} and \eqref{secondorder} represents a local metastable state in the following sense: starting from an Erd\H{o}s-R\'enyi graph $G(n,p)$, under the GD evolution, the edge and other subgraph densities stay close to that of the starting state for an exponential in $n$ amount of time. A related version of such a result appears in the low temperature analysis in \cite{bbs}.}

\section{Statements of the results} \label{section 2}
Even though the above results establish in a certain weak sense, that ERGM in the high temperature phase behaves like an  Erd\H{o}s-R\'enyi graph $G(n,p^*)$, several problems remain open, in particular pertaining to how this approximation can be made quantitative.   We list below a few of them:
\begin{enumerate}
\item Does the Glauber dynamics satisfy functional inequalities like Poincar\'e and Log-Sobolev inequalities?
\item What kind of concentration of measure does the ERGM exhibit? 
\item Do natural observables like the number of edges in an ERGM satisfy a central limit theorem? 
\end{enumerate}
Answers to such questions have several potential applications including in testing of hypothesis problems in the statistical study of networks and not surprisingly, this has led to a significant body of work over the recent years. Given the above preparation, we now present our main results and compare them to the progress made in the aforementioned, related works. 
Throughout the sequel, we assume that $\vec{\beta}$ is in the sub-critical (high temperature) phase. {Furthermore, since we will have to deal with different measures, such as the equilibrium measure $\pi,$ the measure induced by the Glauber dynamics, various couplings in the definitions of Wasserstein distances and so on, we will use the same notation $\mathbb{P}$ to denote the underlying measure and the corresponding expectation by $\E$. The exact object will be clear from context and will not lead to any confusion. Finally a random graph of size $n$ will be denoted by $X=(X_e)_{e\in E(K_n)},$ where $X_e$ is the indicator that the edge $e$ is present.} \\  
\noindent
\textbf{Spectral-gap.}
We will denote the spectral gap of the generator $\cL$ defined in \eqref{generator} by $\gamma.$  The following variational characterization is well known: 
\begin{align}\label{variational}
\gamma=\inf_{f}\cE(f,f), \text{ where }f:\cG_n\rightarrow \R \text{ satistifies }\E_{\pi}(f)=0 \text{ and } \E_\pi(f^2)=1. 
\end{align}
It is proved in \cite{bbs}, that the mixing time of GD in the high temperature phase is $\Theta(n^2 \log n)$, which in turn implies that the relaxation time i.e., $\frac{1}{\gamma},$ is $O(n^2 \log n)$.  However it turns out that as a relatively straightforward consequence of already known results, a sharp estimate of the relaxation time throughout the entire high temperature regime follows. This is the first result we record. 
\begin{theorem}\label{Theorem 0} The spectral gap $\gamma$ of $ERGM$ is $\Theta(n^{-2}).$ Thus there exists a constant $C>0$  such that for sufficiently large $n$ and any $f\in L^2(\pi)$,
\begin{align} \label{poincare}
\textup{Var}(f) \leq Cn^2 \mathcal{E}(f,f).
\end{align}
\end{theorem}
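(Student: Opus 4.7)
The claim combines matching upper and lower bounds on $\gamma$, and the Poincar\'e inequality \eqref{poincare} is equivalent to the lower bound via the variational formula \eqref{variational}. I would treat the two directions separately.

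For the upper bound $\gamma = O(n^{-2})$, the natural test function is $f(x):=x_e - \pi(x_e=1)$ for an arbitrary fixed edge $e$. In the subcritical regime, the single-edge marginal concentrates near the unique fixed point $p^*\in(0,1)$ of $\varphi_{\vec\beta}$, so $\mathrm{Var}_\pi(f)=\Theta(1)$. Since $f$ depends only on the coordinate $x_e$, only updates at edge $e$ contribute to $\mathcal{E}(f,f)$; as $(f(x^e)-f(x))^2=1$ and $c(x,e)\in[0,1]$, one immediately obtains $\mathcal{E}(f,f) \leq 1/M = O(n^{-2})$. Substituting into \eqref{variational} gives $\gamma = O(n^{-2})$.

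For the lower bound $\gamma \geq c\,n^{-2}$, my plan is to extract a per-step Wasserstein-type contraction from the monotone coupling analysis underlying \cite{bbs}. The aim is to establish that for every $t$,
$$\mathbb{E}[d_H(Z^+(t+1),Z^-(t+1))] \leq \left(1 - \frac{c}{n^2}\right)\mathbb{E}[d_H(Z^+(t),Z^-(t))],$$
for some $c=c(\vec\beta)>0$. The mechanism is exactly the one driving rapid mixing in \cite{bbs}: re-coupling the updated edge gains a factor of order $1/M$ per step, while the propagation of a disagreement at edge $e$ to some other edge $f$ through the mixed derivative $\partial_{ef}H$ is controlled because the subcriticality condition $\varphi_{\vec\beta}'(p^*)<1$ keeps the total propagation factor strictly below the self-coupling gain. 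Such a one-step Hamming contraction is precisely Ollivier's positive Ricci curvature of order $1/n^2$, from which $\gamma \geq c/n^2$ follows by a standard argument (for instance, applying the contraction to $P_tf$ for a Lipschitz eigenfunction, or by the usual path-coupling-to-spectral-gap lemma).

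The main obstacle is upgrading the mixing-time statement $\tmix = O(n^2\log n)$ of \cite{bbs} to this sharper one-step contraction: the generic spectral-gap-to-mixing-time inequality only yields $\gamma \geq c/(n^2\log n)$, falling short of the claimed bound by a factor of $\log n$. Removing the logarithm requires revisiting the one-step coupling estimate: a Taylor expansion of $\Phi(\partial_e H(Z^+))-\Phi(\partial_e H(Z^-))$ (with $\Phi$ as in \eqref{impfunction}) in terms of the mixed derivatives $\partial_{ef}H$ along the coupled paths, combined with the subcriticality condition, should produce a per-step contraction factor of the form $1-c/n^2$ with $c>0$ uniform in $n$. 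Once this is in hand, the Poincar\'e inequality \eqref{poincare} is immediate.
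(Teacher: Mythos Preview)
Your upper bound argument is correct and matches the paper exactly.

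For the lower bound, however, there is a genuine gap. The global one-step contraction
\[
\mathbb{E}[d_H(Z^+(t+1),Z^-(t+1))] \leq \Big(1 - \frac{c}{n^2}\Big)\mathbb{E}[d_H(Z^+(t),Z^-(t))] \quad\text{for every } t
\]
that you aim to extract is \emph{false} outside the Dobrushin uniqueness regime. The paper states this explicitly (see the discussion around \eqref{213} and \eqref{214}). The subcriticality condition $\varphi_{\vec\beta}'(p^*)<1$ is a statement about the derivative \emph{at} $p^*$; it controls the propagation factor only once the chain sits in the good set $\mathcal{T}_\delta$ where all local subgraph densities are near $p^*$. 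For configurations far from $\mathcal{T}_\delta$ (e.g.\ the all-$+$ starting point), the relevant one-step propagation is governed by $\|L\|_1$ in \eqref{crucialdef23}, which need not be below $1$. So your Taylor-expansion/Ollivier-curvature route cannot produce a uniform $c>0$ in the full subcritical phase.

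The paper's fix is exactly to work around this: a burn-in of order $n^2$ steps brings the chain into $\mathcal{T}_\delta$ with probability $1-e^{-\Omega(n)}$ (Theorem~\ref{theorem 2.1}), after which one has the contraction \emph{with an additive error}, namely \eqref{235'}. Running this to $t=n^3$ and using sub-multiplicativity of $\sup_{\mu,\nu}d_{TV}(\mu_t,\nu_t)$ yields $d_{TV}(Z^+(t),Z^-(t))\le e^{-\alpha t/n^2}$ for $t\ge n^3$. The spectral gap is then read off from the standard inequality $(1-\gamma)^t\le 2\, d_{TV}(Z^x(t),\pi)$, giving $\gamma\ge c/n^2$. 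The key point you are missing is that one does not need a one-step contraction from arbitrary states; an eventual exponential decay of total variation at rate $n^{-2}$ suffices, and this is obtained by combining the burn-in, the contraction-plus-error on $\mathcal{T}_\delta$, and sub-multiplicativity.
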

{By standard theory of reversible Markov chains {\cite[Chapter 12]{lpw}}, the above  implies that the variance of  $P_tf$ decays exponentially fast along the GD, i.e., there exists $c>0$ such that for sufficiently large $n$ and any $t>0$,
\begin{align*}
\text{Var}(P_tf) \leq \Big(1-\frac{c}{n^2}\Big)^t \text{Var}(f).
\end{align*}
A similar bound holds for the Dirichlet form $\mathcal{E}(P_tf,P_tf)$ as well,
\begin{align*}
\mathcal{E}(P_tf,P_tf) \leq \Big(1-\frac{c}{n^2}\Big)^t \mathcal{E}(f,f).
\end{align*}
}
\noindent
\textbf{Concentration of measure.} Concentration of measure properties of ERGM are the focus of this article.  Often in spin systems, for very high temperature values, the model falls in  DU regime (see Section \ref{section 3.1} for definition), where certain classical perturbation arguments can be used to prove Gaussian concentration for Lipschitz functions. While for the ERGM, similar arguments were presented in \cite{ss},  going beyond the DU regime has resisted rigorous mathematical analysis so far.  Before making precise statements, we first define the notion of a Lipschitz function on $\mathcal{G}_n$: for an $M$-dimensional vector $\textbf{v}=(v_1,\cdots,v_M)$ with $v_i\geq 0$, and $f: \cG_n \mapsto \R$ we say that $f \in \text{Lip}(\textbf{v})$ if 
\begin{align}\label{lipdef}
|f(x)-f(y)| \leq \sum_{i=1}^M v_i \1\{x_i\neq y_i\}.
\end{align} 

A metric measure space $(X,d,\mu)$ is said to admit an \it{exponential concentration}  if  for some $u>0$,
\begin{align*}
\sup_{\norm{f}_{\text{Lip}}\leq 1, \ \int f d\mu = 0}\int_X e^{uf} d\mu < \infty.
\end{align*}
 It is a classical fact that when $X$ is a Riemannian manifold with metric $d$ and  induced volume measure $\mu$, the existence of a positive spectral gap for the Laplacian  implies the exponential concentration of $(X,d,\mu)$. We refer to the monographs \cite{blm,l} for more details.

The discrete analog of the above fact 
 was obtained by Ledoux \cite{l2}. More formally, let $X$ be a finite (or countable) set, and $P:X\times X\rightarrow \R$ be a Markov kernel. Also assume that a probability measure $\mu$  is reversible with respect to $P$. Then, we define the following notion of the Lipschitz constant of a function $f:X\rightarrow \R$:
 \begin{align} \label{lip}
 \norm{f}_{\text{Lip}}^2 := \sup_{x\in X} \sum_{y\in X} |f(x)-f(y)|^2 P(x,y),
\end{align} 
and  then define the canonical distance on $X$ by
\begin{align*}
d(x,y) := \sup_{\norm{f}_{\text{Lip}}\leq 1, \ \int f d\mu = 0} [f(x)-f(y)].
\end{align*}
It is proved in \cite[Theorem 2.2]{l2} that if  $\lambda_1$ is a spectral gap of the Markov chain, then 
\begin{align*}
\sup_{\norm{f}_{\text{Lip}}\leq 1}\int_X e^{\sqrt{\lambda_1/2} f} d\mu < 3.
\end{align*}
In particular, $(X,d,\mu)$ admits an exponential concentration if $\lambda_1>0$.
Applying this fact in conjunction with the already established Poincar\'e inequality \eqref{poincare} for the ERGM, allows one to conclude that there exists a constant $C>0$ such that  for any $f$ with  $\norm{f}_{\text{Lip}} \leq 1$ (with respect to the Lipschitz norm \eqref{lip}),
\begin{align*}
\mathbb{P}(|f(X)-\mathbb{E} f(X)|>t) \leq 3e^{-Ct / n}.
\end{align*}
The next theorem is the main result of this article which improves the above to establish the expected Gaussian concentration result throughout the high temperature phase.

\begin{maintheorem}  \label{Theorem 2}  There exists a constant $c>0$  such that for sufficiently large $n$, for any $f\in \textup{Lip}(\textup{\textbf{v}})$ and $t\geq 0$,
{\begin{align}
\mathbb{P}(|f(X)-\mathbb{E} f(X)|>t) \leq 2e^{-ct^2 / \norm{\textup{\textbf{v}}}_1\norm{\textup{\textbf{v}}}_\infty} .
\end{align}}
\end{maintheorem}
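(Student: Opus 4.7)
The plan is to follow the paper's announced theme of translating the temporal contraction of the monotone grand coupling established in \cite{bbs} into a quantitative spatial estimate on the equilibrium measure $\pi$, and then to feed this estimate into Chatterjee's exchangeable-pair concentration bound (which is perfectly suited to producing sub-Gaussian tails of the form $\exp(-ct^2/(\|\vec v\|_1\|\vec v\|_\infty))$, consistent with what one gets from bounded differences in the product-measure case).

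Concretely, let $X\sim\pi$ and let $X'$ be obtained from $X$ by one step of Heat-bath Glauber dynamics; by reversibility $(X,X')$ is an exchangeable pair. Using the grand coupling, set
\[
F(X,X') := \sum_{k=0}^{\infty} \bigl( f(Z^X(k)) - f(Z^{X'}(k)) \bigr),
\]
where $Z^X,Z^{X'}$ are coupled monotonically. A telescoping computation using reversibility gives $\E[F(X,X')\mid X] = f(X) - \E_\pi f$, so Chatterjee's sub-Gaussian bound for exchangeable pairs reduces the theorem to a uniform estimate on the variance proxy
\[
\tfrac{1}{2}\,\E\bigl[\,|F(X,X')\,(f(X)-f(X'))|\,\big|\, X\bigr] \;\le\; C\, \|\vec v\|_1\|\vec v\|_\infty.
\]
Since $X$ and $X'$ coincide except possibly at the chosen edge $I$, one has $|f(X)-f(X')| \le v_I \mathbf{1}\{X_I\ne X'_I\}$; combining this with the Lipschitz bound $|F| \le \sum_k \sum_{e} v_e \mathbf{1}\{Z^X_e(k)\ne Z^{X'}_e(k)\}$, averaging over the uniform edge $I$, and using $M \asymp n^2$, the desired variance-proxy bound reduces to the \emph{spatial} (integrated-in-time) estimate
\[
\sum_{k=0}^{\infty} \E\bigl[ d_H\bigl( Z^x(k), Z^{x'}(k)\bigr)\bigr] \;\le\; C\, n^2,
\]
uniformly over pairs $x\le x'$ with $d_H(x,x')=1$.

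The main obstacle is proving exactly this integrated bound, which is the technical heart of the paper's \emph{temporal-to-spatial} translation. Combining the trivial bound $d_H(Z^x(k),Z^{x'}(k))\le k+1$ with the $n^2 e^{-ck/n^2}$ contraction inherited by monotone domination from the worst-case $(+,-)$ coupling of \cite{bbs} yields only $O(n^4\log n)$, which is too weak by a factor of $n^2\log n$. What is required is that a single-edge discrepancy does not macroscopically spread: one must argue that under the sub-critical Glauber dynamics, the disagreement region behaves as a sub-critical branching/contact-type process with decay rate $\Theta(1/n^2)$ already from time zero, yielding $\E[d_H(Z^x(k),Z^{x'}(k))]\lesssim e^{-ck/n^2}$ and hence an integrated bound of $O(n^2)$. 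Establishing this refinement beyond the Dobrushin--Shlosman regime---where naive one-step path coupling fails but the \cite{bbs} global contraction is available as input---is the most delicate step, and the step where the exchangeability and mean-field structure of ERGM must be used in an essential way.
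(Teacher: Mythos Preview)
Your framework is exactly the paper's: the exchangeable pair $(X,X')$ from one Glauber step, the antisymmetric $F$ built from the semigroup, and Chatterjee's variance-proxy bound. One small technicality: $F$ must be a \emph{deterministic} antisymmetric function of $(x,y)$ to apply Chatterjee's theorem; the paper takes $F(x,y)=\sum_{k\ge 0}\bigl(P^k f(x)-P^k f(y)\bigr)$, not the sample-path expression you wrote. Your reduction to the integrated bound
\[
\sum_{k\ge 0}\E\bigl[d_H\bigl(Z^{x_{l+}}(k),\,Z^{x_{l-}}(k)\bigr)\bigr]\;=\;O(n^2)
\]
uniformly in $x,l$ is also correct and is precisely what the paper proves.

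Where you go astray is in diagnosing the difficulty of this last bound. You use the deterministic estimate $d_H\le k+1$ for the initial phase and the $(+,-)$ sandwich for the contraction phase, obtain $O(n^4)$, and conclude that a new ``sub-critical branching'' argument giving immediate decay $e^{-ck/n^2}$ from time zero is required. It is not. The paper's argument is much simpler and uses only ingredients already on the table:
\begin{itemize}
\item For $0\le k\le cn^2$: the \emph{unconditional} one-step recursion (valid for every $\vec\beta$, Dobrushin or not) gives
\[
\E\bigl[d_H(Z^x(k),Z^{x'}(k))\bigr]\;\le\;\Bigl(1+\tfrac{\|L\|_1-1}{M}\Bigr)^{k}\cdot 1\;\le\; e^{c(\|L\|_1-1)}\;=\;O(1),
\]
since $\|L\|_1$ is bounded (possibly $>1$) and $M\asymp n^2$. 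This is the observation you are missing: a single-edge discrepancy need not \emph{decay} during the burn-in, it only needs to stay $O(1)$, and the crude bound already delivers that. Your ``trivial bound'' $k+1$ throws away exactly this.
\item For $cn^2< k< n^3$: apply the \cite{bbs} contraction \emph{directly} to the specific pair $(x_{l-},x_{l+})$ (equation (4.18) of the paper), not via the $(+,-)$ sandwich. Since the burn-in left $\E[d_H(cn^2)]=O(1)$ rather than $M$, one gets $\E[d_H(k)]\le C(1-\delta/M)^{k-cn^2}+e^{-\Omega(n)}$.
\item For $k\ge n^3$: sub-multiplicativity gives $e^{-\alpha k/n^2}$.
\end{itemize}
Summing the three regimes yields $O(n^2)+O(M/\delta)+O(n^2)=O(n^2)$, and you are done. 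No new structural input beyond \cite{bbs} is needed; the ``most delicate step'' you describe dissolves once you replace $k+1$ by the expected bound and apply contraction to the actual pair rather than to $(+,-)$.
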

This is the first result of its kind, and we expect that the method could be useful in  other related settings.  We next move on to a key application of the above.\\

\noindent
\textbf{Central limit theorem.} 
Going beyond concentration of measure, establishing 
central limit theorems (CLT) has been a fundamental problem in the study of classical Gibbsian systems. For instance, CLT  for the magnetization has been obtained for the subcritical (finite range) Ising model, using exponential decay of correlations (see \cite{e1,gw,n,n1} for more details). A similar result holds for the exactly solvable mean-field Ising model (Curie-Weiss) for all sub-critical $\beta<1$ (see \cite{em} for more details). 

However, unlike classical spin systems, as has been alluded to before, there are several barriers to proving a CLT type result in the ERGM setting.  {The only result we are aware of is for the ERGM with the Hamiltonian being the two-star counting function (two-star model) where \cite{mukherjee1} established a full CLT heavily exploiting a special Ising-type interaction the model admits. }

Our next result makes partial progress in this front.  In particular, we prove a CLT \textit{throughout the high temperature regime} for the number of open bits restricted to a sub-linear (in $n$) number of coordinates. To the best of the authors' knowledge, this is the first CLT of any kind in this {general} setting. While the full CLT for the number of edges is still open,  refined versions of the methods introduced in this paper might be useful in resolutions of the same and other related questions. 

\begin{maintheorem} \label{Theorem 3} For any  sequence  of positive integers $m$ satisfying $m=o(n)$ and $m\rightarrow \infty$ as $n\rightarrow \infty$ we have the following. Consider any set of 
$m$ different edges $i_1,\cdots,i_m \in E(K_n)$ that do not share a vertex. Then, the following central limit theorem for the normalized number of open edges among $i_1,\cdots,i_m$ holds:
\begin{align}
\frac{X_{i_1} + \cdots+X_{i_m}  - \mathbb{E} [X_{i_1} + \cdots+X_{i_m} ]}{\sqrt{\textup{Var}(X_{i_1} + \cdots+X_{i_m})}} \xrightarrow{d} N(0,1),
\end{align}
where $\xrightarrow{d}$ denotes weak convergence.
\end{maintheorem}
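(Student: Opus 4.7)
The plan is to apply Stein's method of exchangeable pairs, with the pair built from a Glauber update restricted to the matching, and with Main Theorem 2 providing the required error bounds. Drawing $X \sim \pi$ and picking $J$ uniformly from $\{i_1, \ldots, i_m\}$, let $X'$ be obtained from $X$ by resampling $X_J$ from $\pi(\cdot \mid \bar X_J)$. Writing $S := \sum_{j=1}^{m} X_{i_j}$, $\hat p_j := \Phi(\partial_{i_j} H(X))$, $\sigma^2 := \mathrm{Var}(S)$ and $W := (S - \mathbb{E}[S])/\sigma$, the Glauber transition rule yields the linear-drift identity
\begin{align*}
\mathbb{E}[W - W' \mid X] \;=\; \frac{1}{m}\, W \;+\; R, \qquad R := \frac{1}{m\sigma}\sum_{j=1}^{m}\bigl(\mathbb{E}[X_{i_j}] - \hat p_j\bigr),
\end{align*}
placing us in the exchangeable-pair framework with $\lambda = 1/m$. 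By any standard form of Stein's exchangeable-pair theorem, to conclude that the Wasserstein distance between $W$ and $N(0,1)$ tends to $0$ it suffices to show that $\mathbb{E}|R|/\lambda \to 0$, $\lambda^{-2}\,\mathrm{Var}(\mathbb{E}[(W-W')^2 \mid X]) \to 0$, and $\lambda^{-1}\,\mathbb{E}|W-W'|^3 \to 0$.

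All three conditions reduce to discrete Lipschitz estimates fed into Main Theorem 2. By inspection of definition \eqref{111}, flipping $X_e$ for a fixed edge $e \neq i_j$ changes $\partial_{i_j}H(X)$ by $O(1/n)$ when $e$ and $i_j$ share a vertex and by $O(1/n^2)$ when they do not; since the $i_j$'s form a matching, each $e$ shares a vertex with at most two of them. Consequently $\sum_j \hat p_j$ lies in $\mathrm{Lip}(\mathbf v)$ with $\|\mathbf v\|_\infty = O(1/n)$ and $\|\mathbf v\|_1 = O(m)$, and Main Theorem 2 gives $\mathrm{Var}(\sum_j \hat p_j) = O(m/n)$, hence $\mathbb{E}|R|/\lambda = O(1/\sqrt{mn}) \to 0$. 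The same Lipschitz analysis applied to $\mathbb{E}[(W-W')^2 \mid X] = \frac{1}{m\sigma^2}\sum_j (X_{i_j} + \hat p_j - 2 X_{i_j} \hat p_j)$ controls the conditional squared-increment variance, and the deterministic bound $|W - W'| \le 1/\sigma$ makes the third moment $O(m^{-1/2})$. For the variance asymptotic, the exchangeable-pair identity $\mathbb{E}[(W-W')^2] = 2\lambda + O(\sqrt{\mathbb{E}[R^2]})$ combined with $\mathbb{E}[(S-S')^2] = \frac{2}{m}\sum_j \mathbb{E}[\hat p_j(1-\hat p_j)] \to 2 p^*(1-p^*)$, where the convergence follows from the Gaussian concentration of $\hat p_j$ around $\varphi_{\vec\beta}(p^*) = p^*$ (another application of Main Theorem 2 to the Lipschitz function $\partial_{i_j} H$), yields $\sigma^2 \sim m\, p^*(1-p^*)$, and in particular $\sigma^2 = \Theta(m)$.

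The principal obstacle will be the Lipschitz bookkeeping, and in particular the simultaneous use of the matching structure and the sublinearity assumption $m = o(n)$. The vertex-disjointness of the $i_j$'s is what keeps $\|\mathbf v\|_\infty = O(1/n)$ for $\sum_j \hat p_j$ (since each auxiliary edge is incident to at most two matching edges), while $m = o(n)$ is what converts the resulting $O(m/n)$-type variance estimates into genuine $o(1)$ bounds on the Stein error. These same obstructions prevent the present approach from extending to the full CLT for the total edge count, where $\Theta(n)$ edges share each vertex and the infinity-norm of the Lipschitz vector degrades to $O(1)$, which explains why the statement is restricted to sublinear matchings.
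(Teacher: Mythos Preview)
Your approach is correct and genuinely different from the paper's. The paper proves Theorem~\ref{Theorem 3} by the \emph{moment method}: it first establishes the $k$-point correlation bound
\[
\bigl|\mathbb{E}[(X_{i_1}-\mathbb{E}X_{i_1})\cdots(X_{i_k}-\mathbb{E}X_{i_k})]\bigr|=O(n^{-k/2})
\]
(Proposition~\ref{prop 5.2}), whose proof combines Main Theorem~\ref{Theorem 2} with the FKG inequality and Newman's positive-quadrant-dependence bound~\eqref{619} to bootstrap from the two-point estimates~\eqref{612}--\eqref{613} to general $k$; it then expands $\mathbb{E}[S_m^k]$ and matches Gaussian moments. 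You bypass this entire correlation hierarchy by applying Stein's exchangeable-pair method directly, feeding the Lipschitz functions $\sum_j\hat p_j$ and $\sum_j(X_{i_j}+\hat p_j-2X_{i_j}\hat p_j)$ into Main Theorem~\ref{Theorem 2}. Your route is more economical---no FKG, no Newman, no induction on $k$---and as a bonus yields a quantitative Wasserstein bound of order $O(m^{-1/2})+O(n^{-1/2})$, whereas the moment method gives only weak convergence. On the other hand, the paper's Proposition~\ref{prop 5.2} (and the quantitative independence result Proposition~\ref{prop 5.1}) are of independent interest and are lost in your streamlined argument.

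Two small points worth tightening. First, your stated rate $\mathbb{E}|R|/\lambda=O(1/\sqrt{mn})$ should read $O(1/\sqrt{n})$: with $\sigma^2=\Theta(m)$ and $\mathrm{Var}(\sum_j\hat p_j)=O(m/n)$ one gets $\mathbb{E}|R|/\lambda\le\sigma^{-1}\sqrt{\mathrm{Var}(\sum_j\hat p_j)}=O(n^{-1/2})$; the conclusion is unaffected. Second, when you write that $\hat p_j$ concentrates ``around $\varphi_{\vec\beta}(p^*)=p^*$'', Main Theorem~\ref{Theorem 2} only gives concentration of $\hat p_j$ around its mean $\mathbb{E}\hat p_j=\mathbb{E}X_{i_j}$; identifying this mean with $p^*$ (asymptotically) needs the fixed-point argument of Proposition~\ref{prop 7.3} or the result from \cite{bbs} quoted in Remark~\ref{remark 6.3}. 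Once that is cited, your derivation of $\sigma^2\sim m\,p^*(1-p^*)$ via the identity $\mathbb{E}[(S-S')^2]=\tfrac{2\sigma^2}{m}+2\sigma^2\mathbb{E}[WR]$ goes through, since the a~priori bound $\sigma^2=O(m)$ from Main Theorem~\ref{Theorem 2} makes the error term $O(\sigma/\sqrt{mn})=O(1/\sqrt{n})$.
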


\noindent
\textbf{Bounding the $W_1-$Wasserstein distance. } Our final result is another consequence of the Gaussian concentration result Theorem \ref{Theorem 2}. Namely, we provide an alternate proof of a quantitative estimate on the closeness between a high temperature ERGM and an Erd\H{o}s-R\'enyi graph obtained in \cite{rr}. 
It is not hard to see from \eqref{phi}, that in the multiparameter case $s\geq 2$, the set of $\vec\beta = (\beta_1,\cdots,\beta_s)$'s, in the high temperature phase, sharing the same $p^*$, form a $(s-1)$ dimensional hyperplane solving
\begin{align} \label{901}
p^* = \frac{\exp(\sum_{i=1}^s 2\beta_i |E_i| (p^*)^{|E_i|-1})}{1+\exp(\sum_{i=1}^s 2\beta_i |E_i| (p^*)^{|E_i|-1})}.
\end{align}
Thus from the discussion so far in this article, for all these $\vec \beta$, the ERGM behaves qualitatively like the  Erd\H{o}s-R\'enyi graph $G(n,p^*)$ in various senses. A particular way to quantify the same  is through the notion of contiguity\footnote{Given two sequences of measures $\{p_n\}, \{q_n\}$ where $p_n$ and $q_n$ are defined on the same space, one says that the former is contiguous with respect to the latter, if for any sequence of events $A_n$, $q_n(A_n)\to 0$ implies $p_n(A_n)\to 0.$}, leading one to speculate whether any  ERGM($\vec\beta$) satisfying \eqref{901}, is contiguous with the Erd\H{o}s-R\'enyi graph $G(n,p^*)$. {While such results have been indeed recently  proved for the multi-parameter Ising model in \cite{gm}, for the ERGM, in the special case where the Hamiltonian $H(x)$ depends only on the number of edges and the two-star count, \cite{mukherjee1} in fact constructed a consistent estimator of $\vec{\beta}$ for \emph{all} values of the latter, proving that the two measures are in fact not contiguous. While a general result refuting contiguity is still missing, currently in the other direction, in the sub-critical case, we present a quantitative bound  in terms of the $W_1-$Wasserstein distance between  ERGM($\vec\beta$) and   $G(n,p^*)$, where for 
$\mu,\nu$ in $\cP(\cG_{n})$: 
\begin{equation*}
W_1(\mu,\nu)=\inf_{\Gamma}\sum_{e\in E(K_n)} \P_{\Gamma}(X_e\ne Y_e),
\end{equation*}
where the infimum is obtained over all couplings {$\Gamma$} of $\mu$ and $\nu.$

\begin{maintheorem}\label{w1distance}
There exists a constant $C>0,$ such that for sufficiently large $n$,
 \begin{align} \label{103}
W_1(\pi,\nu) \leq C  n^{3/2} \sqrt{\log n},
\end{align} 
where $\pi$ is the ERGM($\vec\beta$) distribution, and $\nu\in \mathcal{P}(\mathcal{G}_n)$ is the Erd\H{o}s-R\'enyi distribution $G(n,p^*)$.
\end{maintheorem}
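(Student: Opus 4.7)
The plan is to construct an explicit coupling $(X,Y)$ with $X\sim\pi$ and $Y\sim\nu$ by running two Markov chains simultaneously with shared randomness, and then to invoke the Gaussian concentration result (Theorem~\ref{Theorem 2}) to bound the expected Hamming distance between them. Specifically, we run the Heat-bath Glauber dynamics of the ERGM $\pi$ on $X$ alongside the simpler product Glauber dynamics of $\nu=G(n,p^*)$ on $Y$ (which resamples each selected edge from $\mathrm{Ber}(p^*)$), coupling the two chains by selecting the same uniformly random edge and the same uniform $U\in[0,1]$ at every step. The updates read $X_e\leftarrow\mathbf{1}\{U\le\Phi(\partial_e H(X))\}$ and $Y_e\leftarrow\mathbf{1}\{U\le p^*\}$; since $\partial_e H(X)$ depends only on $\bar{X}_e$, each chain marginally reproduces its standard Glauber dynamics, so $\pi$ and $\nu$ are each preserved. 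Initializing $X(0)\sim\pi$ and $Y(0)\sim\nu$, the pair $(X(t),Y(t))$ is a valid coupling of $\pi$ and $\nu$ for every $t\ge 0$.

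For $t$ exceeding the coupon-collector time $\Theta(n^2\log n)$, each edge $e$ has almost surely been updated at least once by time $t$. Letting $\tau_e$ denote the most recent update of $e$ before $t$, we have $X_e(t)=X_e(\tau_e)$ and $Y_e(t)=Y_e(\tau_e)$, so the shared-uniform rule and stationarity of $X$ at $\tau_e^-$ give the clean pointwise identity
\[
\P\bigl(X_e(t)\ne Y_e(t)\bigr) \;=\; \E_\pi\!\bigl[\,\bigl|\Phi(\partial_e H(X))-p^*\bigr|\,\bigr].
\]
Summing over all $M=n(n-1)/2$ edges and exploiting exchangeability,
\[
W_1(\pi,\nu) \;\le\; \sum_e \P\bigl(X_e(t)\ne Y_e(t)\bigr) \;=\; M\cdot \E_\pi\!\bigl[\,|\Phi(\partial_e H(X))-p^*|\,\bigr],
\]
so the task reduces to showing $\E_\pi\!\bigl[|\Phi(\partial_e H(X))-p^*|\bigr]=O(\sqrt{\log n/n})$.

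Since $\Phi$ is Lipschitz, it is enough to bound $\E_\pi|\partial_e H(X)-\Psi_{\vec\beta}(p^*)|$. A direct count from \eqref{defhamil} gives $|\partial_{ef}H(x)|=O(1/n)$ when $e,f$ share a vertex and $O(1/n^2)$ otherwise; since only $2(n-2)$ edges share a vertex with $e$, the Lipschitz vector $\mathbf v$ of $x\mapsto\partial_e H(x)$ satisfies $\|\mathbf v\|_\infty=O(1/n)$ and $\|\mathbf v\|_1=O(1)$, giving $\|\mathbf v\|_1\|\mathbf v\|_\infty=O(1/n)$. Theorem~\ref{Theorem 2} then supplies the sub-Gaussian tail $\P(|\partial_e H(X)-\E_\pi\partial_e H(X)|\ge s)\le 2e^{-cns^2}$, and in particular $\E_\pi|\partial_e H(X)-\E_\pi\partial_e H(X)|=O(n^{-1/2})$. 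For the residual bias $|\E_\pi\partial_e H(X)-\Psi_{\vec\beta}(p^*)|$, the heat-bath identity $\E_\pi[\Phi(\partial_e H(X))]=\P_\pi(X_e=1)$ combined with a Taylor expansion of $\Phi$ (whose remainder is controlled by the variance bound of order $1/n$ delivered by Theorem~\ref{Theorem 0}) and the contractive behaviour of $\varphi_{\vec\beta}$ near $p^*$ (via $\varphi_{\vec\beta}'(p^*)<1$) localizes $\P_\pi(X_e=1)$, and hence $\E_\pi[\partial_e H(X)]$, within $O(\sqrt{\log n/n})$ of $\Psi_{\vec\beta}(p^*)$. Combining these with the display above yields $W_1(\pi,\nu)=O(n^{3/2}\sqrt{\log n})$.

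The main obstacle is the quantitative bias estimate $|\E_\pi\partial_e H(X)-\Psi_{\vec\beta}(p^*)|$: while Gaussian concentration around the mean is essentially immediate from Theorem~\ref{Theorem 2} and alone gives the $n^{-1/2}$ scaling, pinning down $\E_\pi[\partial_e H(X)]$ near the fixed point $\Psi_{\vec\beta}(p^*)$ requires a careful bootstrap that exploits $\varphi_{\vec\beta}'(p^*)<1$ to upgrade the Poincar\'e-type variance information from Theorem~\ref{Theorem 0} into a sharp bound on the mean; this step appears to be the source of the extra $\sqrt{\log n}$ factor in the final bound.
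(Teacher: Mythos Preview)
Your approach is essentially the paper's: couple the stationary ERGM and $G(n,p^*)$ Glauber dynamics with shared edge choices and uniforms, reduce $W_1$ to $M\cdot\E_\pi|\Phi(\partial_e H(X))-p^*|$, then split the latter into a fluctuation piece (handled by Theorem~\ref{Theorem 2}) and a bias piece $|\E_\pi\partial_e H(X)-\Psi_{\vec\beta}(p^*)|$. Your ``last update'' reduction is exactly the sequential-scan variant the paper records in the Remark following its main proof, so structurally nothing is new.

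There is, however, a genuine gap in your bias bootstrap. From the heat-bath identity and your Taylor step you get $a_n:=\P_\pi(X_e=1)=\Phi(\E_\pi\partial_e H)+O(1/n)$. To invoke the contraction $\varphi_{\vec\beta}'(p^*)<1$ you need the \emph{approximate fixed-point equation} $a_n=\varphi_{\vec\beta}(a_n)+o(1)$, i.e.\ you must show $\E_\pi\partial_e H(X)=\Psi_{\vec\beta}(a_n)+O(1/n)$. This is not automatic: $\partial_e H$ is a polynomial in edge indicators, and the required factorization $\E_\pi[X_{i_1}\cdots X_{i_k}]=a_n^k+O(1/n)$ is the content of the paper's Lemma (equation~\eqref{720}), which itself uses FKG plus Theorem~\ref{Theorem 2} to bound pairwise covariances (equations~\eqref{612}--\eqref{613}) and then Newman's positive-dependence inequality~\eqref{619}. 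Without this link the loop does not close: you cannot pass from ``$a_n$ is close to $\Phi(\E\partial_e H)$'' to ``$a_n$ is close to $\varphi_{\vec\beta}(a_n)$''. Your final paragraph correctly flags this as the main obstacle, but the ingredients you list (heat-bath identity, Taylor, Poincar\'e variance, contraction) are not sufficient on their own.

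Two smaller points. First, ``almost surely updated'' after coupon-collector time is false; it is merely with high probability, so your identity $\P(X_e(t)\neq Y_e(t))=\E_\pi|\Phi(\partial_e H)-p^*|$ holds only up to an additive $(1-1/M)^t$ term---harmless, but worth stating. Second, if you carry your Taylor argument through correctly (together with the missing product-factorization step), you actually get $|a_n-p^*|=O(1/n)$, not $O(\sqrt{\log n/n})$: the paper's $\sqrt{\log n}$ arises from its truncation of the Gaussian tail rather than from Taylor. So your own scheme, once completed, would reproduce the log-free bound of \cite{rr} mentioned in Remark~\ref{remark 7.4}, and your attribution of the $\sqrt{\log n}$ to the bias step is somewhat inconsistent with your stated method.
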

The above result is of relevance in the study of estimability properties of the ERGM and a related discussion is presented later in Section \ref{pseudo-section}. 
\begin{remark} \label{remark 7.4} 
A slightly stronger statement than \eqref{103}, without the logarithmic term $\sqrt{\log n}$, is obtained in \cite{rr}. {It is shown that for any sub-critical ERGM$(\vec\beta)$,} there exists a constant $C>0$ such that for sufficiently large $n$ and any Lipschitz function $f$,
\begin{align} \label{731}
|\mathbb{E}f(X) - \mathbb{E}f(Y)| \leq C \norm{f}_{\text{Lip}} \cdot n^{3/2},
\end{align}
where, the Lipschitz constant $\norm{f}_{\text{Lip}}$ is defined as an infimum of values $c>0$ such that
\begin{align*}
|f(x)-f(y)| \leq c \sum_{i=1}^M |x_i-y_i|
\end{align*}
for any $x,y\in \mathcal{G}_n$.
The key step in \cite{rr} to obtain \eqref{731} was to use the \it{Stein's method for CLT}  and obtain a solution $h$ to Stein equation 
\begin{align*}
\mathcal{L}h(x) = f(x) - \mathbb{E}f(X)
\end{align*}
(recall that $\mathcal{L}$ is a generator associated with ERGM(${\vec{\beta}}$)). 
\end{remark}

\section{Main ideas of the proofs and organization of the article}\label{iop}
We discuss the key ideas in the proofs of the theorems stated above. 
\subsection{Poincar\'e inequality}The proof of this, in fact, follows in a relatively straightforward fashion from known results. Nonetheless, we include it for completeness.
To understand the spectral gap, we rely on its relation to temporal mixing properties of the GD, which was the subject of study in \cite{bbs}. It is a classical fact (see e.g., \cite{lpw}) that
 a Markov chain exhibiting a strict contraction property (described below), has a positive spectral gap. In particular, 
this is verifiable if the temperature $\vec\beta$ lies in DU regime. In fact, if the $\beta_i'$s are small enough so that $\Psi'_\beta(1)<4$ (see \eqref{phi}), then there exists a constant $\delta>0,$ such that for all $x,y \in \cG_n$ some coupling of {$Z^x(1)$ and $Z^y(1)$},
\begin{align} \label{213}
\mathbb{E} d_H(Z^x(1),Z^y(1)) < \Big(1-\frac{\delta}{n^2}\Big)d_H(x,y)
\end{align}
(more explanations are provided in Section \ref{section 3.1}).
This immediately implies that the relaxation time is $O(n^2)$ in  DU regime.

However, \eqref{213} does not hold throughout the high temperature phase. Nonetheless, it is proved in \cite{bbs} that there exists a constant $c>0$ such that something quite similar does hold. Namely, {under the monotone coupling $(Z^+(t),Z^-(t))$,} for each $t\geq cn^2$,
\begin{align} \label{214}
\mathbb{E}[ d_H(Z^+(t+1),Z^-(t+1))] < \Big(1-\frac{\delta}{n^2}\Big) {\mathbb{E}}[d_H(Z^+(t),Z^-(t))]+e^{-\Omega (n)}.
\end{align}
This allows us to control $d_{TV}(Z^+(t),Z^-(t))$ at  $t=n^3$. Then,  the sub-multiplicative property of $d_{TV}(\cdot,\cdot),$ implies that there exists a constant $\alpha>0$ such that for each $t\geq n^3$,
\begin{align*}
d_{TV}(Z^+(t),Z^-(t)) \leq e^{-\alpha t/n^2}.
\end{align*}
This and standard relations between total variation distance and the spectral gap then allows one to quickly deduce that the relaxation time is $O(n^2).$ The lower bound follows by proving an upper bound on the spectral gap using the variational formula \eqref{variational} and a suitable test function.

\subsection{Gaussian concentration} Theorem \ref{Theorem 2} is the main result of this paper and the key tool we rely on, is a version of \it{Stein's method for  concentration} via construction of suitable exchangeable pairs, developed in \cite{c,c1} (see Section \ref{section 7.1} for details). This combined with temporal mixing results derived from \cite{bbs} allows us to obtain the sought bounds. Informally, to make this strategy work, one has to control the $L^\infty$-norm of the  function
\begin{align} \label{231}
F(x,y):=\sum_{t=0}^\infty (P_tf(x)-P_tf(y)).
\end{align} 

In particular, for our application, it will suffice to consider $y$ which differs from $x$ only at one edge, say $e$.  
Thus, to control the RHS in \eqref{231}, using the Lipschitz-ness of $f$, with $x,y$ as above, it suffices to bound the $L_1$ norm of the vector
\begin{equation}\label{hammingvector}
 \textbf{r}(t) = (\mathbb{P}(Z^x(t)_e\neq Z^y(t)_e))_{e\in E(K_n)},
 \end{equation} for the monotone coupling $(Z^x(t), Z^y(t))_{t\geq 0}$. More precisely, we will show the following bounds for $\|r(t)\|_1$ which will allow us to bound the sum on the RHS of \eqref{231}: {for some $C,d,\delta,\alpha>0$,}
\begin{align*}
\|r(t)\|_1\le \left\{\begin{array}{cc}C& t\le dn^2,\\
(1-\frac{\delta}{n^2})^{t-dn^2} & dn^2\le t\le n^3,\\
e^{-\alpha t / n^2} & t\ge n^3.
\end{array}
\right.
\end{align*}

The first one is a crude bound derived from straightforward considerations, the second bound follows from a contraction result deduced from the estimates in \cite{bbs}, while the third bound follows from the second by sub-multiplicative properties of total variation distance.

\subsection{Central limit theorem} The strategy to prove the central limit theorem is to estimate the correlation structure of the edge variables $X_e.$  This is a rather delicate task and in particular, we can obtain useful estimates only when the edges are vertex disjoint. More precisely, we prove the following  key $k$\it{-correlation}  estimate: 
if $k$ many distinct edges $i_1,\cdots,i_k$ do not share a vertex, then
\begin{align}\label{kcorr1}
\big\vert \mathbb{E} [(X_{e_{i_1}}-\mathbb{E} X_{e_{i_1}})\cdots (X_{e_{i_k}}-\mathbb{E}X_{e_{i_k}})] \big\vert= O\Big(\frac{1}{n^{k/2}}\Big).
\end{align}

In order to establish the above bound, we first obtain quantitative estimates on the total variation distance between the distribution of an edge variable conditioned on other edges and the unconditional marginal (Proposition \ref{prop 5.1}) as applications of the Gaussian concentration result and FKG inequality. In fact, we show that the fluctuation arising from conditioning on a fixed number of other edges is at most $O(\frac{1}{n})$:
\begin{align}\label{quantind}
\Big\vert\mathbb{P}(X_{e_1} = 1) -  \mathbb{P}(X_{e_1} = 1 | X_{e_2}=a_1,\cdots,X_{e_k}=a_k) \Big\vert= O\Big(\frac{1}{n}\Big).
\end{align}
The $k-$correlation estimate is then proved using the above result and a conditional-version of Theorem \ref{Theorem 2}. This allows us to complete the proof using a moment method argument. 
\subsection{Bounding $W_1-$Wasserstein distance:} The proof proceeds by constructing a coupling of the stationary GD on the ERGM and $G(n,p^*)$ respectively,  where the same edge is updated in the two Markov chains and the update probabilities are coupled in an optimal way. We then show as an application of the Gaussian concentration result, that with high probability the total variation distance between the update probabilities of the updated edge in the two models is no more than 
$O(\sqrt{\frac{\log n}{n}}),$ throughout the time it takes to update all the edges. The above then implies that the amount of discrepancy induced between the two chains till all the edges are updated is no more than $O(\sqrt{\log n}n^{3/2})$, which along with stationarity of the chains is enough to finish the proof.

\subsection{Organization of the paper}
Several general facts about ERGM in the high temperature phase are reviewed in Section \ref{section 3}; while Dobrushin's uniqueness regime is highlighted in Section \ref{section 3.1}, the general high temperature regime is discussed in Section \ref{section 3.2}. The short proof of the Poincar\'e inequality appears towards the end of this section. 

The proof of main result in this article establishing Gaussian concentration for Lipschitz functions appears in Section \ref{section 7}. The subsequent Sections \ref{section 8} and \ref{section 9} contain the proofs of the central limit theorem (Theorem \ref{Theorem 3}) and the bound on the Wasserstein distance to Erd\H{o}s-R\'enyi graphs (Theorem \ref{w1distance}) respectively. 

\subsection{Acknowledgement} The authors thank Charles Radin and Arthur Sinulis for useful comments. They also thank two anonymous referees for their detailed and thoughtful comments that helped improve the paper.
SG's research is partially supported by a Sloan Research Fellowship in Mathematics and NSF Award DMS-1855688. KN's research is supported by a summer grant of the
UC Berkeley Mathematics department.

\section{ERGM in the high temperature regime} \label{section 3}
To motivate some of our arguments and set up further necessary notation, we review some useful facts about the ERGM in the high temperature regime. We begin by considering the  perturbative case \cite{d1,d2}.
\subsection{ Dobrushin's uniqueness regime} \label{section 3.1}
For edges $e$ and $f$, let 
\begin{align} \label{interdepend}
a_{ef}:=\sup_{x\in \mathcal{G}_n} d_{TV}(\pi_e(\cdot|x_{f+}),\pi_e(\cdot |x_{f-})) .
\end{align}
The \it{Dobrushin interdependent matrix} $A$, of size $M\times M$, is defined as  $A=(a_{ef})_{e,f \in E(K_n)}$. We say that the Gibbs measure $\pi$ satisfies the $L^2$-version of \it{Dobrushin's uniqueness condition} if the matrix $A$ satisfies $\norm{A}_{2} <1$. This condition is slightly different from the original $L^1$-version of  Dobrushin's uniquness condition, where the matrix $A$ is assumed to satisfy $\norm{A}_1<1$, ($\norm{A}_{2}$ and $\norm{A}_{1}$ denote the norms of $A$, thought of as an operator from $\R^{M}$ to itself equipped with the $\ell_2$ and $\ell_1$ norms respectively). {
Recalling \eqref{tranrates} and the fact that the total variation distance between two probability measures $\mu$ and $\nu$ on the  two spin system $\{0,1\}$ is $|\mu(\{1\}) - \nu(\{1\})|$,    for any configuration $x$,}
\begin{align}  \label{201}
 d_{TV}&(\pi_e(\cdot|x_{f+}),\pi_e(\cdot |x_{f-}))=|\pi_e( x_e=1|x_{f+}) - \pi_e(x_e=1|x_{f-})| \nonumber \\
&=  \frac{1}{2}\big(1+\tanh (\partial_e H(x_{f+})/2)\big) - \frac{1}{2}\big(1+\tanh (\partial_e H(x_{f-})/2)\big) \nonumber \\
&\leq \frac{1}{4}|\partial_{ef}H(x)| {=} \frac{1}{4} \sum_{i=1}^s \beta_i \frac{N_{G_i}(x,e,f)}{n^{|V_i|-2}},
\end{align} 
where the above notations were introduced around \eqref{homomorphism}. Here, the last inequality above follows from the mean value theorem, the definition of $\partial_{ef}H(x)$ and that $\sup_{x\ge 0} \frac{{\rm{d}}}{{\rm d}x}\tanh(x)=1$. {The last equality in \eqref{201} follows from the following expressions of the discrete derivatives. Recalling the definitions from \eqref{discretederivatives1}, we have
\begin{align} \label{802}
 \partial_e H(x) =  \sum_{i=1}^s \beta_i \frac{  N_{G_i}(x,e)}{n^{|V_i|-2}}, 
\end{align}
and hence,
\begin{align*}
\partial_{ef}H(x) = \partial_f (\partial_e H(x)) =  \sum_{i=1}^s \beta_i \frac{ \partial_f N_{G_i}(x,e)}{n^{|V_i|-2}} = \sum_{i=1}^s \beta_i \frac{N_{G_i}(x,e,f)}{n^{|V_i|-2}}.
\end{align*}
 } 
We now define an $M\times M$ symmetric matrix $L = (L_{ef})$ by
\begin{align}\label{crucialdef23}
L_{ef}:=\frac{1}{4} \sum_{i=1}^s \beta_i \frac{ N_{G_i}(K_n,e,f)}{n^{|V_i|-2}}.
\end{align}
Then, by \eqref{201}, we have that for any edges $e$ and $f$,
\begin{align} \label{202}
a_{ef}&\leq L_{ef}.
\end{align}
Using the fact that 
\begin{equation}\label{keyineq100}
\displaystyle{\sum_{e: e\neq f} N_G(K_n,e,f)} = (|E|-1)N_G(K_n,f)=2|E|(|E|-1) \binom {n-2} {|V|-2} (|V|-2)!,
\end{equation}
we conclude that 
\begin{align*}
\sum_{e: e\neq f} L_{ef}  = \frac{1}{4} \sum_{i=1}^s  \beta_i \frac{ \sum_{e:e\neq f}  N_{G_i}(K_n,e,f)}{n^{|V_i|-2}} < \frac{1}{2}\sum_{i=1}^s \beta_i |E_i|(|E_i|-1).
\end{align*}
 Since $L$ is symmetric, this implies that 
\begin{align} \label{211}
 \sup_n  \norm{L}_2 \leq  \sup_n  \norm{L}_1 < \infty.
\end{align}
  Assume that $\vec\beta=(\beta_1,\ldots,\beta_s),$ satisfies the condition
\begin{align} \label{dobrushin}
\frac{1}{2}\sum_{i=1}^s \beta_i |E_i|(|E_i|-1)<1.
\end{align}
Then, by \eqref{202},
\begin{align*}
\norm{A}_2\leq  \norm{L}_2\leq \norm{L}_1 <1,\quad \norm{A}_1\leq \norm{L}_1<1.
\end{align*}

Thus, all $\vec\beta$ satisfying \eqref{dobrushin} lies in the ($L^2$-version of) DU regime.
Although in such regimes, for classical spin systems on lattices, it is well known the stationary measure behaves qualitatively like a product measure in a rather strong sense, satisfying the LSI and other related concentration of measure properties (see e.g. \cite{cesi,k,sz0,sz,z,m} for more details), for the ERGM, as mentioned earlier such an analysis was carried out only very recently in \cite{ss}.

On the other hand,  the $L^1$-version of DU condition,  $\norm{A}_1<1$, implies that there exists a coupling for the GD such that the Hamming distance strictly contracts. {Although this is a well-known fact, we include the brief proof for the sake of completeness. 
Suppose that two configurations $x,y \in \cG_n$ differ at edges $j_1,\cdots,j_l$, and take a path of configurations $x=w^0,w^1,\cdots,w^{l-1},w^l=y$ such that $w^{i-1}$ and $w^{i}$ differ  only  at the edge $j_i$. Then,  under the grand coupling $(Z^x(t),Z^y(t))$, 
\begin{align} \label{212}
\mathbb{P}(Z^x_e(1)\neq Z^y_e(1)) &= \Big(1-\frac{1}{M}\Big)\1\{x_e\neq y_e\} + \frac{1}{M} d_{TV} (\pi_e(\cdot|x),\pi_e(\cdot |y)) \nonumber \\
& \leq \Big(1-\frac{1}{M}\Big)\1\{x_e\neq y_e\} + \frac{1}{M} \sum_{i=1}^l d_{TV} (\pi_e(\cdot|w^{i-1}),\pi_e(\cdot |w^i)) \nonumber \\
& \leq \Big(1-\frac{1}{M}\Big)\1\{x_e\neq y_e\} + \frac{1}{M} \sum_{i=1}^l  a_{e j_i}   \nonumber  \\
&= \Big(1-\frac{1}{M}\Big)\1\{x_e\neq y_e\} + \frac{1}{M} \sum_{f:f\neq e} a_{ef}  \1\{x_f\neq y_f\}.
\end{align} }
Adding these inequalities over all edges $e$, we obtain 
\begin{align} \label{contraction}
\E [d_H(Z^x(1), Z^y(1))] \leq \Big(1-\frac{1-\norm{A}_1}{M}\Big) d_H(x,y),
\end{align}
proving a strict contraction in the Hamming distance if  $\norm{A}_1<1$. This also implies bounds on the spectral gap (see  the monograph \cite{lpw} for details).

{Note that in general, by  \eqref{202} and \eqref{contraction}, for any two configurations $x,y \in \cG_n$, under the grand coupling  $(Z^x(t), Z^y(t))$,
\begin{align*}
\E [d_H(Z^x(1), Z^y(1))] \leq \Big(1-\frac{1-\norm{L}_1}{M}\Big) d_H(x,y),
\end{align*}
which by induction implies 
 \begin{align} \label{contraction2}
\E [d_H(Z^x(t), Z^y(t))] \leq \Big(1-\frac{1-\norm{L}_1}{M}\Big)^t  d_H(x,y).
\end{align}
 }
 \begin{remark}\label{LSIdiscussion}
It is proved in \cite[Theorem 6.2]{cd} that when $\vec{\beta}$ satisfies \eqref{dobrushin},  the model is in the replica symmetric phase even for negative values of $\beta_i$ i.e.,  the maximizers in \eqref{variational 0} are constant functions provided that
$
\frac{1}{2}\sum_{i=1}^s |\beta_i| |E_i|(|E_i|-1)<1.
$ {In fact, under the same condition which falls in the DU regime, as mentioned earlier, \cite{ss} established the LSI and derived concentration of measure properties.}
 \end{remark}

\subsection{Sub-critical but beyond Dobrushin's uniqueness regime} \label{section 3.2}
Even beyond DU regime, where strict contraction property of GD no longer holds, it was nonetheless proved in \cite{bbs} that there exists a set $\mathcal{T}\in \cG_n,$ such that 
\begin{enumerate}
\item GD starting from two states in $\mathcal{T}$ exhibits a strict contraction. 
\item From any starting state, GD hits $\cT$ with high probability within $O(n^2)$ steps.  
\end{enumerate}
We will need precise versions of the above statements to prove Theorem \ref{Theorem 0}. To this end, it would be convenient to introduce the notion of normalized subgraph counting number, following  \cite{bbs}. For any configuration $x$, edge $e$, and a graph $G$, define  $r_G(x,e)$ by
\begin{align} \label{320}
r_G(x,e):=\Big(\frac{N_G(x,e)}{2|E|n^{|V|-2}}\Big)^{1/(|E|-1)},
\end{align}
and then define
\begin{align}\label{defga}
r_{1,\text{max}}(x):=\max_{e,G\in \mathbb{G}_a}r_G(x,e),\quad r_{1,\text{min}}(x):=\min_{e,G\in \mathbb{G}_a}r_G(x,e),
\end{align}
where $\mathbb{G}_a$ denotes a collection of all graphs with at most $a$ many vertices. Recall from \eqref{defhamil} that $a$ is the number satisfying $|V_i|\leq a$ for $i=1,2,\cdots,s$. 
Recalling $p^{*}$ from \eqref{variational}, and ${\delta}>0$, let $\mathcal{T}_{{\delta}}$  be the collection of configurations defined by
\begin{align}\label{t} 
\mathcal{T}_{{\delta}}:=\{x \in \cG_n: p^*-{\delta}<  r_{1,\text{min}}(x)\leq r_{1,\text{max}}(x)<  p^*+{\delta}\}.
\end{align}
The following is one of the key results in \cite{bbs}. 
\begin{theorem}\cite[{Lemma 16}]{bbs}  \label{theorem 2.1}
Suppose that $\vec{\beta}$ lies in the high temperature regime. Then, for any ${\delta}>0$, there exists a constant $c$  such that for any initial configuration $x\in \mathcal{G}_n$, when $t\geq cn^2$,
\begin{align*}  
\mathbb{P}(Z^x(t)\in \cT_{ {\delta}} ) \geq 1-e^{-\Omega (n)}.
\end{align*}
\end{theorem}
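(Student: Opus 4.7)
The plan is to use the monotone (grand) coupling to reduce to the two extremal starting configurations $\pm$, and then show that both $Z^{\pm}(t)$ enter $\mathcal{T}_{\delta}$ within $O(n^2)$ steps via a fluid-limit argument combined with exponential concentration.

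First, reduce via monotonicity: for any graph $G$ and edge $e$, the count $N_G(x,e)$ is non-decreasing in $x$ since enlarging $x$ only adds embeddings, so $r_G(x,e)$, and consequently $r_{1,\max}(x)$ and $r_{1,\min}(x)$, are monotone in $x$. Since the monotone coupling gives $Z^-(t) \leq Z^x(t) \leq Z^+(t)$ for every starting state $x$, it suffices to show that with probability at least $1 - e^{-\Omega(n)}$ one has $r_{1,\max}(Z^+(t)) < p^* + \delta$ and $r_{1,\min}(Z^-(t)) > p^* - \delta$ simultaneously for every $t \geq c n^2$.

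Next, for a configuration $x$ that ``looks like'' $G(n,p)$ in the sense that $r_G(x,e) \approx p$ for all $e, G$, one has $\partial_e H(x) = \sum_i \beta_i N_{G_i}(x,e)/n^{|V_i|-2} \approx \Psi_{\vec{\beta}}(p)$, so the single-edge update probability is approximately $\Phi(\Psi_{\vec{\beta}}(p)) = \varphi_{\vec{\beta}}(p)$. Letting $\rho(t)$ denote the edge density of $Z^+(t)$, the conditional expected per-step increment is close to $M^{-1}(\varphi_{\vec{\beta}}(\rho(t)) - \rho(t))$, and on the macroscopic time scale $s = t/M$ this is a discretization of the ODE $\dot q = \varphi_{\vec{\beta}}(q) - q$. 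In the subcritical regime, $p^*$ is the unique fixed point of $\varphi_{\vec{\beta}}$ with $\varphi_{\vec{\beta}}'(p^*) < 1$, and monotonicity of $\varphi_{\vec{\beta}}$ on $[0,1]$ together with the contraction at $p^*$ implies exponential convergence $q(s) \to p^*$; thus the deterministic trajectory enters $(p^* - \delta/2, p^* + \delta/2)$ within macroscopic time $O(\log(1/\delta))$, i.e.\ within $O(n^2)$ GD steps, and the symmetric argument works for $\rho(Z^-(t))$ starting from $0$. The actual process tracks this trajectory up to a martingale with $O(1/M)$ increments, so Azuma-Hoeffding over $O(n^2)$ steps yields deviation at most $\delta/4$ except with probability $e^{-\Omega(n^2 \delta^2)} \leq e^{-\Omega(n)}$. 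For higher subgraph counts, each $N_G(\cdot,e)$ is Lipschitz with constant $O(n^{|V|-2})$, so an analogous fluid/concentration argument applies; a union bound over the $O(n^2)$ edges and the finitely many $G \in \mathbb{G}_a$ preserves the $e^{-\Omega(n)}$ failure probability.

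The main obstacle is closing a self-consistency loop: the approximation $\partial_e H(x) \approx \Psi_{\vec{\beta}}(\rho)$ presupposes that $x$ already looks locally Erd\H{o}s-R\'enyi with density $\rho$, which is essentially the conclusion we wish to establish. The resolution is a bootstrap/sandwich: first control the edge density uniformly (for which concentration is easiest since $\rho$ is a Lipschitz average), then exploit the monotone coupling to trap the configuration between two approximately-Erd\H{o}s-R\'enyi profiles whose densities converge to $p^*$, and finally propagate this sandwich to the higher subgraph counts, which within the monotone envelope concentrate around their Erd\H{o}s-R\'enyi expectations once the edge density has stabilized near $p^*$. Making this bootstrap robust to the interplay between different subgraph counts appears to be the genuinely delicate technical step in the argument of \cite{bbs}.
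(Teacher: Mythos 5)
First, a point of record: the paper does not prove this statement at all; it is imported verbatim as \cite[Lemma 16]{bbs}, so the only meaningful comparison is with the argument in \cite{bbs}. Your reduction to the extremal states via the grand coupling is correct and is indeed how one should start (monotonicity of $N_G(\cdot,e)$, hence of $r_{1,\max}$ and $r_{1,\min}$, is fine). But the core of your sketch has a genuine gap, and it is exactly the one you flag yourself: the fluid-limit step replaces $\partial_e H(x)$ by $\Psi_{\vec\beta}(\rho)$ with $\rho$ the \emph{global} edge density, which is only valid if $x$ already looks locally Erd\H{o}s--R\'enyi. Your proposed resolution --- ``once the edge density has stabilized near $p^*$, the higher subgraph counts concentrate around their Erd\H{o}s--R\'enyi expectations within the monotone envelope'' --- is not a proof step but a restatement of the conclusion: a configuration with edge density $p^*$ can have local counts $N_G(x,e)$ wildly different from $2|E|(p^*)^{|E|-1}n^{|V|-2}$ at particular edges, and nothing in the edge-density ODE controls this. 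In addition, the concentration bookkeeping is off for the local quantities: a single edge update changes $N_G(x,e)/n^{|V|-2}$ by $O(1/n)$ (not $O(1/n^2)$), so Azuma over $O(n^2)$ steps gives failure probability $e^{-\Omega(\delta^2)}$, not $e^{-\Omega(n)}$; getting the stated $e^{-\Omega(n)}$ requires exploiting that $N_G(x,e)$ is an aggregate over $\sim n^{|V|-2}$ embeddings whose constituent edges are re-randomized, which is a different (and more delicate) concentration argument than bounded differences on the trajectory.

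The actual argument in \cite{bbs} avoids the self-consistency loop by never passing through the global edge density. It works directly with the extremal local statistics $r_{1,\max}(Z^+(t))$ and $r_{1,\min}(Z^-(t))$: since $\Phi$ is increasing, every edge update of $Z^+$ is stochastically dominated by a coin with success probability roughly $\varphi_{\vec\beta}\bigl(r_{1,\max}\bigr)+o(1)$, and after $O(n^2)$ further steps enough of the edges entering each count $N_G(\cdot,e)$ have been resampled that, with probability $1-e^{-\Omega(n)}$ (uniformly over the $O(n^2)$ pairs $(e,G)$ by a union bound), $r_{1,\max}$ falls below approximately $\varphi_{\vec\beta}$ of its previous bound; symmetrically $r_{1,\min}$ rises. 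Iterating a $\delta$-dependent but $n$-independent number of times along the sequences $a_0=1$, $a_{k+1}=\varphi_{\vec\beta}(a_k)\downarrow p^*$ and $b_0=0$, $b_{k+1}=\varphi_{\vec\beta}(b_k)\uparrow p^*$ (convergence uses uniqueness of the fixed point and \eqref{secondorder}, as in your ODE heuristic, but applied to the one-dimensional iteration rather than to a density trajectory) lands the chain in $\mathcal{T}_\delta$ after $O(n^2)$ total steps. So your high-level intuition about the drift toward $p^*$ is right, but the vehicle must be the local quantities $r_G(x,e)$ themselves, and the bootstrap you defer is precisely the content of the lemma rather than a routine afterthought.
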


Now, we introduce a generalized version of the Dobrushin's matrix defined in Section \ref{section 3.1}, where instead of taking  a supremum over all configurations in \eqref{interdepend}, we keep the dependence on the configuration $x$ to define $A(x):=(a_{ef}(x))_{e,f\in E(K_n)}$ by,
\begin{align} \label{interdepend2}
a_{ef}(x):=d_{TV}(\pi_e(\cdot|x_{f+}),\pi_e(\cdot |x_{f-})).
\end{align}
Then,
by definition,
\begin{align} \label{aef}
a_{ef}(x) = \Phi(\partial_e H(x_{f+} ) ) -  \Phi(\partial_e H(x_{f-})).
\end{align}

For two configurations $y\leq z$, consider the monotone coupling of $Z^y(t)$ and $Z^z(t)$, and define the $M$-dimensional vector $\textbf{r}(t) =  (\textbf{r}(t)_e)_{e\in E(K_n)} $ by
\begin{align*}
\textbf{r}(t)_e  = \mathbb{P}(Z^y_e(t) \neq Z^z_e(t)).
\end{align*}

 The proof of \cite[Lemma 18]{bbs} implies the following proposition, which states the pointwise decay of the vector $\textbf{r}(t)$ along the GD starting from the set $\mathcal{T}_{{\delta}}$.
 
{ 
\begin{proposition} \label{prop 4.3}
Suppose that 
$\vec{\beta}$ lies in the high temperature phase. Then,
there exists a  constant $\delta>0$ such that  the following holds for sufficiently large $n$.
For any $y\leq z$ with $y,z\in \mathcal{T}_\delta$,  there is a $M\times M$ matrix $K  = K(n,y,z)$ with
\begin{align*}
\norm{K}_1 < 1-\frac{\delta}{M}
\end{align*}
and
\begin{align} \label{431}
\textup{\textbf{r}}(1)\leq  K \textup{\textbf{r}}(0),
\end{align}
where the inequality is pointwise.
\end{proposition}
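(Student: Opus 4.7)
The plan is to refine the Dobrushin-type one-step inequality \eqref{212} by working with the \emph{configuration-dependent} matrix $a_{ef}(\cdot)$ from \eqref{aef} rather than its supremum, and to extract the required contraction from the sub-critical inequality $\varphi'_{\vec\beta}(p^*)<1$ using the subgraph-density control built into $\mathcal{T}_\delta$. The monotonicity $y\le z$ together with the ferromagnetic nature of $H$ gives an \emph{exact} telescoping identity rather than a triangle-inequality bound, which makes the approach clean.

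\textbf{Step 1: exact expression for $\mathbf{r}(1)$.} Under the monotone coupling,
\[\mathbf{r}(1)_e = (1-1/M)\1\{y_e\ne z_e\} + (1/M)\bigl[\Phi(\partial_eH(z)) - \Phi(\partial_eH(y))\bigr].\]
Pick any monotone path $y=w^{(0)}\le w^{(1)}\le \cdots\le w^{(\ell)}=z$ in which $w^{(i)}$ is obtained from $w^{(i-1)}$ by inserting a single edge $j_i$ from the disagreement set of $y,z$, and telescope using \eqref{aef} (with the convention $a_{ee}\equiv 0$) to obtain
\[\Phi(\partial_eH(z))-\Phi(\partial_eH(y)) = \sum_{i=1}^{\ell} a_{e,j_i}(w^{(i-1)}) = \sum_f a_{ef}(\widetilde w^f)\,\mathbf{r}(0)_f,\]
where $\widetilde w^f$ denotes the path state immediately before edge $f$ is inserted. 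Setting $K_{ee}:=1-1/M$ and $K_{ef}:=a_{ef}(\widetilde w^f)/M$ for $e\ne f$ (with $\widetilde w^f$ chosen arbitrarily in $\mathcal{T}_\delta$ when $y_f=z_f$, which does not affect $K\mathbf{r}(0)$) yields the pointwise equality $\mathbf{r}(1)=K\mathbf{r}(0)$.

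\textbf{Step 2: intermediates lie in $\mathcal{T}_\delta$, and reduction.} Each $N_G(\cdot,e)$ in \eqref{320} is a sum of products of non-negative entries and hence monotone in the configuration; so are $r_G(\cdot,e)$, $r_{1,\min}$, and $r_{1,\max}$. Since $y\le \widetilde w^f\le z$ and $y,z\in\mathcal{T}_\delta$, we conclude $\widetilde w^f\in\mathcal{T}_\delta$. Writing $\|K\|_1=(1-1/M)+(1/M)\max_f\sum_{e\ne f}a_{ef}(\widetilde w^f)$, it suffices to prove the uniform bound
\[\max_{w\in\mathcal{T}_\delta}\,\max_f\,\sum_{e\ne f} a_{ef}(w)\le \varphi'_{\vec\beta}(p^*)+\eta(\delta),\qquad \eta(\delta)\to 0\text{ as }\delta\to 0,\]
since sub-criticality $\varphi'_{\vec\beta}(p^*)<1$ then allows us to take $\delta$ small enough to force $\|K\|_1\le 1-\delta/M$.

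\textbf{Step 3: the main obstacle, and conclusion.} The uniform estimate above is the delicate step. By the mean value theorem applied to \eqref{aef}, $a_{ef}(w)=\Phi'(\xi_{ef})\,\partial_{ef}H(w)$ for some $\xi_{ef}$; from \eqref{802} and $w\in\mathcal{T}_\delta$, both $\partial_eH(w_{f\pm})$ lie within $O(\delta)$ of $\Psi_{\vec\beta}(p^*)$ uniformly in $e,f$, so $\Phi'(\xi_{ef})=p^*(1-p^*)+O(\delta)$. The remaining task is to prove the pair-count asymptotic
\[\sum_{e\ne f} N_{G_i}(w,e,f) = 2|E_i|(|E_i|-1)(p^*)^{|E_i|-2} n^{|V_i|-2}\bigl(1+O(\delta)\bigr)\]
for every $w\in\mathcal{T}_\delta$. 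The $K_n$-version is the identity \eqref{keyineq100}, but for general $w$ one must decompose the sum over ordered pairs of distinct edges $(\alpha,\beta)\in E(G_i)$ mapped to $(e,f)$, recognise each partial count as a homomorphism-type quantity involving a subgraph of $G_i$ on at most $a$ vertices whose density in $w$ is controlled within $O(\delta)$ by the $r_G$'s via $\mathcal{T}_\delta$, and reassemble---the $(p^*)^{|E_i|-2}$ factor coming from the $|E_i|-2$ edges of $G_i$ not mapped to $e$ or $f$. This combinatorial bookkeeping, which is the main obstacle, is exactly what converts sub-criticality into the contraction coefficient: combined with the identity
\[\varphi'_{\vec\beta}(p^*) = p^*(1-p^*)\sum_{i=1}^s 2\beta_i|E_i|(|E_i|-1)(p^*)^{|E_i|-2},\]
obtained by differentiating \eqref{phi} at the fixed point $\Phi(\Psi_{\vec\beta}(p^*))=p^*$, it delivers the required bound.
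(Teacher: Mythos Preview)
Your argument is essentially the paper's: both telescope the one-step disagreement along a monotone path $y=w^{(0)}\le\cdots\le w^{(\ell)}=z$, reduce to the uniform column-sum bound $\sup_{w\in\mathcal{T}_\delta}\max_f\sum_{e\ne f}a_{ef}(w)<1-\delta$ via the mean-value theorem on $\Phi$, and extract this from $\varphi'_{\vec\beta}(p^*)<1$. For the pair-count step you flag as the main obstacle, the paper rewrites $\sum_{e\ne f}N_{G_i}(w,e,f)$ as a sum $\sum_{g\in E(G_i)}N_{(G_i)_g}(w,f)$ over edge-deleted subgraphs and then uses $w\in\mathcal{T}_\delta$ together with \cite[Lemma~10]{bbs} to bound each term by $(p^*+\delta)^{|E_i|-2}N_{(G_i)_g}(K_n,f)$, which after \eqref{keyineq100} gives exactly your target asymptotic.
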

}

Since neither the statement or its proof explicitly appears in \cite{bbs}, we include the details for completeness.
\begin{proof}

{The proof follows from the following result:  if $\vec{\beta}$ lies in the high temperature phase, then there exists sufficiently small $\delta>0$ such that for sufficiently large $n$:
\begin{align} \label{205}
x\in \mathcal{T}_\delta \Rightarrow \norm{A(x)}_1 < 1-\delta.
\end{align}
To see the above, 
by mean value theorem, for some {$\partial_e H(x_{f-})\leq u_0\leq \partial_e H(x_{f+})$}, 
\begin{align} \label{221}
a_{ef}(x) &= \Phi(\partial_e H(x_{f+}) ) -  \Phi(\partial_e H(x_{f-}))   =  \Phi'(u_0)  (\partial_e H(x_{f+})-\partial_e H(x_{f-})) .
\end{align}
Note that  for any edges $e$ and $f$,
\begin{align} \label{226}
r_G (x_{f+},e) - r_G (x_{},e)  = o(1), \quad r_G (x_{},e) - r_G (x_{f-},e)  = o(1).
\end{align}{
In fact, the quantity  $N_G (x_{f+},e) - N_G (x_{},e)$ is $  O(n^{|V(G)|-3})$ if $e$ and $f$ share an edge and $O(n^{|V(G)|-4})$ if $e$ and $f$ do not share an edge (similar property holds for $N_G (x_{},e) - N_G (x_{f-},e)$ as well). Using the inequality $(z_1+z_2)^p - z_1^p \leq z_2^p$ for $z_1,z_2\geq 0, \ 0<p\leq 1$, and recalling the normalization $n^{|V(G)|-2}$ in \eqref{320}, we have \eqref{226}. Hence,}
since $x\in \mathcal{T}_\delta$,  for sufficiently large $n$, we have $x_{f+},x_{f-} \in \mathcal{T}_{2\delta}$ for any edge $f$.
 
   Thus recalling \eqref{phi}, {and using
 \begin{equation}\label{knineq}
N_G(K_n,g)=2|E| \binom {n-2} {|V|-2} (|V|-2)!= (1-o(1))2|E| n^{|V_i|-2}\leq 2|E| n^{|V_i|-2},
\end{equation}} we obtain, 
\begin{align*}
\partial_e H(x_{f+})  \leq  \sum_{i=1}^s \frac{\beta_i (p^*+2\delta)^{|E_i|-1} N_{G_i}(K_n,e)}{n^{|V_i|-2}} \overset{\eqref{knineq}}{\leq} \Psi_{\vec{\beta}} (p^*+2\delta),
\end{align*}
and
\begin{align*}
\partial_e H(x_{f+}) \geq  \sum_{i=1}^s \frac{\beta_i (p^*-2\delta)^{|E_i|-1} N_{G_i}(K_n,e)}{n^{|V_i|-2}} = (1-o(1)) \Psi_{\vec{\beta}} (p^*-2\delta).
\end{align*}
Since the same inequalities also hold for $\partial_e H(x_{f-})$,  using \eqref{221} and the fact that $ \Phi'$ is decreasing on $(0,\infty)$, for any $\eta>0$, for sufficiently large $n$ and small $\delta>0$,
\begin{align} \label{222}
a_{ef}(x) &\leq (1+\eta)  \Phi'(\Psi_{\vec{\beta}} (p^*))  (\partial_e H(x_{f+})-\partial_e H( x_{f-} )) \nonumber \\
&=(1+\eta)  \Phi'(\Psi_{\vec{\beta}} (p^*))      \sum_{i=1}^s \beta_i \frac{N_{G_i}(x,e,f)}{n^{|V_i|-2}} .
\end{align}
 
Since $\varphi_{\vec{\beta}}'(p^*)<1$ (see \eqref{secondorder}), one can
take constants ${\eta},\delta>0$ sufficiently small  satisfying  
\begin{align} \label{323}
(1+{\eta})^2 \varphi_{\vec{\beta}}'(p^*)<1-\delta.
\end{align}
For such ${\eta}$, define a $M\times M$ symmetric matrix $U(x)=(U_{ef}(x))$  by
\begin{align} \label{325}
U_{ef} (x):= (1+{\eta})  \Phi'(\Psi_{\vec{\beta}} (p^*))  \sum_{i=1}^s \beta_i \frac{N_{G_i}(x,e,f)}{n^{|V_i|-2}} .
\end{align}
Then, by \eqref{222}, for sufficiently  large $n$ and sufficiently small $\delta>0$,
\begin{align} \label{324}
a_{ef}(x) \leq U_{ef}(x).
\end{align}
We verify that the condition \eqref{323} ensures that
\begin{align} \label{326}
\norm{U(x)}_1<1-\delta.
\end{align}  In fact,  denoting $G_g$ by a graph obtained from $G$ by removing an edge $g$,  since $x\in \mathcal{T}_\delta$,  using \cite[Lemma 10]{bbs} and \eqref{keyineq100}, for sufficiently small $\delta>0$,

\begin{align*}
 \sum_{e:e\neq f} \sum_{i=1}^s \beta_i \frac{N_{G_i}(x,e,f)}{n^{|V_i|-2}} &=  \sum_{i=1}^s  \frac{\beta_i \sum_{g\in E(G_i), g\neq f} N_{(G_i)_g}(x,f)  }{n^{|V_i|-2}} \\
 &\leq     \sum_{i=1}^s  \frac{\beta_i \sum_{g\in E(G_i), g\neq f} (p^*+\delta)^{|E_i|-2} N_{(G_i)_g}(K_n,f)  }{n^{|V_i|-2}} \\
 &=  \sum_{e:e\neq f} \sum_{i=1}^s \beta_i (p^*+\delta)^{|E_i|-2}  \frac{N_{G_i}(K_n,e,f)}{n^{|V_i|-2}} \\
 &\leq  (1+{\eta}) \sum_{i=1}^s \beta_i (p^*)^{|E_i|-2}   2|E_i|(|E_i|-1).
\end{align*}
Applying this to \eqref{325}, by \eqref{323}, we have
\begin{align}\label{boundnorm12}
\sum_{e:e\neq f} U_{ef}(x) &\leq (1+{\eta})^2 \Phi'(\Psi_{\vec{\beta}} (p^*))    \sum_{i=1}^s 2\beta_i |E_i|(|E_i|-1) (p^*)^{|E_i|-2}\nonumber \\
&= (1+{\eta})^2   \Phi'(\Psi_{\vec{\beta}} (p^*))  \Psi'_{\vec{\beta}} (p^*)= (1+{\eta})^2 \varphi_{\vec{\beta}}'(p^*)<1-\delta.
\end{align}
This concludes the proof of \eqref{326}. 
 Therefore, by \eqref{324} and \eqref{326},  we have \eqref{205}.}

{
Equipped with \eqref{205}, one can conclude the proof of Proposition \ref{prop 4.3}.
Suppose that two configurations $y\leq z$ with $d_H(y,z)=\ell,$ differ at coordinates $i_1,\cdots,i_\ell$.  Then, consider a sequence
 $$y=w^0\leq w^1\leq \cdots \leq w^{\ell-1}\leq w^{\ell}=z,$$
 where $w^{k}=w^{k-1}\cup {i_k} .$  Since $y,z\in \mathcal{T}_\delta$ and $r_G(x,e)$ is an increasing function in $x$, we have $w^k\in \mathcal{T}_\delta$ for all $k=1,\cdots,l$. Note that using a  triangle inequality and the definition of Dobrushin Matrix \eqref{interdepend2},
\begin{align*} 
d_{TV}(\pi_{e}(\cdot |y),\pi_{e}(\cdot |z)) \leq \sum_{k=1}^{l} d_{TV}(\pi_{e}(\cdot |w^{k-1}),\pi_{e}(\cdot |w^{k}))\leq \sum_{k=1}^{l} a_{ei_k}(w^k).
\end{align*}
Hence,
\begin{align}  \label{225}
\mathbb{P}(Z^y_e(1)\neq Z^z_e(1)) &= \Big(1-\frac{1}{M}\Big)\1\{y_e\neq z_e\} + \frac{1}{M} d_{TV} (\pi_e(\cdot|y),\pi_e(\cdot |z)) \nonumber \\
& \leq \Big(1-\frac{1}{M}\Big)\1\{y_e\neq z_e\} + \frac{1}{M}  \sum_{k=1}^{l} a_{ei_k}(w^k).
\end{align}
Define a matrix $K'=(K'_{ef})_{e,f\in E(K_n)}$ in the following way: for any $e$ and $1\le k\le l,$ let $K'_{ei_k} :=  a_{e i_k}(w^k)$, and otherwise let   $K'_{ef}=0$ for $f\neq i_1,\cdots,i_l$. Since  $w^k\in  \mathcal{T}_\delta$, by \eqref{205}, we have $\norm{K'}_1  < 1-\delta$ for sufficiently small $\delta>0$. 
This is because $\norm{K'}_1$ is nothing but the maximum $L_1$ norm of its columns. Now, by definition,   columns of $K'$ indexed by edges other than $i_1,i_2,\ldots i_k$ are zero. Further, for any $1\le \ell \le k,$  the column of $K'$ indexed by $i_{\ell}$ agrees with the corresponding column of $A(w^\ell),$ which by \eqref{205}, has $L_1$ norm bounded by $1-\delta.$

Thus, the matrix
\begin{align*}
K=   \Big(1-\frac{1}{M}\Big) I + \frac{1}{M}
K'
\end{align*}
satisfies $\norm{K}_1< 1-\frac{\delta}{M}$. Now writing \eqref{225} for every edge $e$ in matrix form yields \eqref{431}.
}

\end{proof}

{
We next state a straightforward consequence of Proposition \ref{prop 4.3} which will be crucial in the proof of the Gaussian concentration result, which states that GD starting from configurations in the set $\mathcal{T}_\delta$ with small $\delta>0$ exhibits contraction. Formally, if $\vec{\beta}$ lies in the high temperature phase, then there exists sufficiently small $\delta>0$ such that for $y\leq z$ with $y,z\in \mathcal{T}_\delta$,  under the grand coupling,
\begin{align} \label{224}
\mathbb{E}[d_H(Z^y(1), Z^z(1))] \leq \Big(1-\frac{\delta}{M}\Big) d_H(y,z).
\end{align}

Therefore,  by \eqref{224} and Theorem \ref{theorem 2.1}, for any configurations $y\leq z$, under the grand coupling, for $t\geq cn^2$,
\begin{align*} 
\mathbb{E} [d_H(Z^y(t+1),Z^z(t+1))]  \leq \Big(1-\frac{\delta}{M}\Big) \mathbb{E} [d_H(Z^y(t),Z^z(t))]  + e^{-\Omega(n)}.
\end{align*}
It follows that for $t\geq cn^2$,
\begin{align} \label{235'}
   \mathbb{E} [d_H(Z^y(t),Z^z(t))] 
&\leq  \Big(1-\frac{\delta}{M}\Big)^{t-cn^2} \mathbb{E} [d_H(Z^y(cn^2),Z^z(cn^2))]   +  e^{-\Omega(n)}.
\end{align}}

In particular,
 recalling that $Z^+(t)$ and $Z^-(t)$ denote the GD starting from the complete and empty initial configurations respectively, {under the grand coupling}, for $t\geq cn^2$,
\begin{align} \label{235}
\mathbb{P}(Z^+(t)\neq Z^-(t)) \leq  \mathbb{E} [d_H(Z^+(t),Z^-(t))] 
&\leq  \Big(1-\frac{\delta}{M}\Big)^{t-cn^2}M +  e^{-\Omega(n)}.
\end{align}
Since ERGM with positive $\vec\beta$ is a monotone system,  by above, we have
\begin{align} \label{237}
\sup_{\mu,\nu \in \cP(\cG_n)} d_{TV} (\mu_t,\nu_t) \leq \Big(1-\frac{\delta}{M}\Big)^{t-cn^2}M +  e^{-\Omega(n)}.
\end{align}

The above preparation already allows us to finish the short  proof of the Poincar\'e inequality.
\begin{proof}[Proof of Theorem \ref{Theorem 0}] {The upper bound follows from the variation characterization \eqref{variational} and plugging in the test function $f: \cG_n \rightarrow \R$ where $f(x)=x_e-\E (X_e),$ where $e$ is a fixed edge and $X$ is distributed according to ERGM($\vec\beta$). To see this, note that for such a function, we have $(f(x^e) - f(x))^2=1$ and $(f(x^{e'})-f(x))^2 = 0$ for any $e'\neq e$, where recall from \eqref{generator} that $x^e$ denotes the configuration obtained from $x$ by flipping the state of $e.$ Thus, using $c(x,e)\le 1$, we have
  \begin{align} \label{200}
  \mathcal{E}(f,f) =  \frac{1}{M}\sum_{x\in \mathcal{G}_n} c(x,e) \pi(x) \leq \frac{1}{M} \leq \frac{C}{n^2}.
  \end{align}
In addition, by Remark \ref{remark 6.3} stated later, which gives the asymptotics $ \lim_{n \rightarrow \infty} \mathbb{P}(X_e=1)  = p^*$ in the high temperature regime,   one can deduce that $\int f^2 d\pi = \mathbb{E} (X_e^2 )-( \mathbb{E}X_e)^2\overset{n\to \infty}{\to}p^*(1-p^*)$ and hence is uniformly bounded away from $0$.
  This combined with \eqref{200}  gives a desired upper bound of order $\frac{1}{n^2}$ for the spectral gap.}
  
For the lower bound, by the following well known relation between {total variation distance} and spectral gap (see \cite[equation (12.15)]{lpw} for more details), 
$$
(1-\gamma)^{t}\le 2d_{TV}(Z^x(t),\pi),
$$
 it suffices to prove  that there exists $d>0$ such that for sufficiently large $n$ and  any $t\geq n^3$, there is a coupling of $Z^{+}(t)$ and $Z^{-}(t)$ satisfying
\begin{align} \label{252}
 \mathbb{P}(Z^{+}(t) \neq Z^{-}(t)) \leq e^{-d t/n^2}.
\end{align}
The proof is now complete by noticing that the above is a straightforward consequence of sub-multiplicative property of total variation distance and plugging in $t=n^3$ in \eqref{237}.
\end{proof}
In the next section, using the contraction estimates we prove the Gaussian concentration.

\section{Gaussian concentration: Proof of Theorem \ref{Theorem 2}} \label{section 7}
As indicated in Section \ref{iop},   Stein's method to prove concentration will be the key tool we rely on. We start by briefly reviewing the pertinent theory of the latter.
\subsection{Stein's method for  concentration} \label{section 7.1}
In \cite{c1}, Chatterjee introduced a beautiful new argument showing how Stein's method of exchangeable pairs can be used to obtain concentration results and presented several applications in \cite{c,cd2}. {For instance, a concentration result for the magnetization (say $m$) of the Curie-Weiss model with an inverse temperature $\beta$ and  external field $h$ was obtained  by showing, for any  $\beta\geq 0$, with high probability, $m \approx \tanh (\beta m + \beta h)$.}
Recall that a pair $(X,X')$ is said to be \it{exchangeable} if $(X,X')$ and $(X',X)$ have the same distribution.
The following is the key theorem we will use as input. 
\begin{theorem}\cite[{Theorem 1.5}]{c1}  \label{stein}
For a separable metric space $S$, let $(X,X')$ be an exchangeable pair of random variables taking values in $S$. Suppose that  $f:S\rightarrow \R$ and an antisymmetric function $F(x,y):S \times S \rightarrow \R$ are square-integrable, and satisfies
\begin{align} \label{311}
f(X):= \mathbb{E} [F(X,X') | X].
\end{align}
Then, define a function $g:S\rightarrow \R$ by
\begin{align} \label{713}
g(x) := \frac{1}{2} \mathbb{E} [|(f(X)-f(X'))F(X,X')|\big|X=x].
\end{align}
If $|g(x)|\leq C$, then for any $\theta \in \R$,
\begin{align}
 \mathbb{E} e^{\theta f(X)} \leq e^{C\theta^2 / 2}.
\end{align}
In particular, for any $t\geq 0$,
\begin{align}
\mathbb{P}(|f(X)| \geq t) \leq 2e^{-t^2 / 2C}.
\end{align}
\end{theorem}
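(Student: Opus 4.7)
The plan is to prove the stated result by extracting a differential inequality for the moment generating function $m(\theta) := \mathbb{E}[e^{\theta f(X)}]$ using the exchangeable-pair identity together with the antisymmetry of $F$, and then applying a Markov/Chernoff argument. I would proceed in five steps.

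First, I would record two simple consequences of the hypotheses. By the exchangeability of $(X,X')$ and the antisymmetry $F(x,y)=-F(y,x)$, swapping the pair gives $\mathbb{E}[F(X,X')] = -\mathbb{E}[F(X,X')]$, hence $\mathbb{E}[f(X)] = \mathbb{E}[F(X,X')] = 0$. More generally, for any measurable $h:S\to \R$ for which the expectation exists,
\begin{equation*}
\mathbb{E}[F(X,X')\, h(X)] \;=\; -\mathbb{E}[F(X,X')\, h(X')] \;=\; \tfrac{1}{2}\mathbb{E}\bigl[F(X,X')(h(X)-h(X'))\bigr].
\end{equation*}
This symmetrization identity is the workhorse of the argument.

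Second, I would differentiate $m(\theta)$ under the integral sign (justified by square-integrability) and use $f(X) = \mathbb{E}[F(X,X')\mid X]$ together with the identity above applied to $h(x)=e^{\theta f(x)}$:
\begin{equation*}
m'(\theta) = \mathbb{E}\bigl[f(X)\,e^{\theta f(X)}\bigr] = \mathbb{E}\bigl[F(X,X')\,e^{\theta f(X)}\bigr] = \tfrac{1}{2}\mathbb{E}\Bigl[F(X,X')\bigl(e^{\theta f(X)} - e^{\theta f(X')}\bigr)\Bigr].
\end{equation*}
Third, I would apply the elementary inequality $|e^{a}-e^{b}| \le \tfrac{|a-b|}{2}(e^{a}+e^{b})$ (equivalent to $\tanh(u)\le u$ for $u\ge 0$) with $a=\theta f(X)$, $b=\theta f(X')$, yielding
\begin{equation*}
|m'(\theta)| \le \tfrac{|\theta|}{4}\,\mathbb{E}\Bigl[|F(X,X')|\,|f(X)-f(X')|\bigl(e^{\theta f(X)}+e^{\theta f(X')}\bigr)\Bigr].
\end{equation*}
Since $|F(X,X')|\,|f(X)-f(X')|$ is symmetric in the pair, exchangeability lets me replace $e^{\theta f(X')}$ by $e^{\theta f(X)}$ in the second piece, and I recognize the resulting expectation as $|\theta|\,\mathbb{E}[g(X)\,e^{\theta f(X)}]$. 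The hypothesis $|g(x)|\le C$ then produces the clean differential inequality
\begin{equation*}
|m'(\theta)| \;\le\; C\,|\theta|\, m(\theta).
\end{equation*}

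Fourth, since $m(\theta)>0$, this gives $\bigl|(\log m)'(\theta)\bigr| \le C|\theta|$, and integrating from $0$ to $\theta$ against $m(0)=1$ yields $\log m(\theta) \le C\theta^{2}/2$, which is the claimed MGF bound. Fifth, a Chernoff argument closes the theorem: for $t\ge 0$ and $\theta>0$, $\mathbb{P}(f(X)\ge t) \le e^{-\theta t}m(\theta) \le \exp(C\theta^{2}/2 - \theta t)$, optimized at $\theta = t/C$ to give $e^{-t^{2}/(2C)}$; applying the same bound with $-\theta$ handles the lower tail, and a union bound produces the factor of $2$.

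The main subtlety, and the place where one must be careful, is the exchangeable symmetrization in Step 2 combined with the factor $\tfrac{1}{2}$ in $g$: the proof only works because antisymmetry of $F$ allows the raw expectation $\mathbb{E}[F(X,X')e^{\theta f(X)}]$ to be rewritten as a centered difference, and the constant $\tfrac{1}{2}$ matches the $\tfrac{1}{2}$ in \eqref{713} exactly once exchangeability is used again to fold $e^{\theta f(X')}$ back to $e^{\theta f(X)}$. Other than this bookkeeping, no extra structure on $S$ is required (only square-integrability of $f(X)$ and $F(X,X')$, which is enough to justify differentiating $m(\theta)$ under the integral and all swaps).
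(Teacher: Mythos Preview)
The paper does not actually prove this theorem; it is quoted from Chatterjee \cite{c1} as a black-box input and then applied in the proof of Theorem~\ref{Theorem 2}. So there is no ``paper's proof'' to compare against here.

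That said, your argument is essentially Chatterjee's original proof and is correct in outline: the symmetrization identity in Step~1, the differential inequality $|m'(\theta)|\le C|\theta|m(\theta)$ in Steps~2--3, and the Gronwall/Chernoff conclusion in Steps~4--5 are exactly the standard route. One technical point deserves more care than ``justified by square-integrability'': a priori you do not know $m(\theta)=\mathbb{E}[e^{\theta f(X)}]$ is finite for all $\theta$, so differentiating under the integral is not immediately legal. The usual fix (and what Chatterjee does) is to run the same computation with a truncated version $f_M:=\max(-M,\min(M,f))$, for which $m_M(\theta)$ is trivially finite and smooth, obtain $m_M(\theta)\le e^{C\theta^2/2}$ uniformly in $M$, and then send $M\to\infty$ via Fatou. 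Alternatively one can argue that the differential inequality itself forces finiteness on a maximal interval and then bootstrap. Either way, this is a routine patch and your proof is otherwise complete.
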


We now describe the construction of the anti-symmetric function to be used. For an exchangeable pair $(X,X')$, there is a natural way to associate a reversible Markov kernel $\mathscr{P}$:
\begin{align} \label{711}
\sP f(x) =  \mathbb{E} [f(X')|X=x].
\end{align}
When we are given a function $f:S\rightarrow \R$ such that $ \mathbb{E} f(X)=0$ and 
\begin{align*}
\sup_{x,y\in S} \sum_{k=0}^\infty |\sP^kf(x)-\sP^kf(y)| <\infty,
\end{align*}
then it is easy to check that the function 
\begin{align} \label{712}
F(x,y):=\sum_{k=0}^\infty (\sP^kf(x)-\sP^kf(y))
\end{align}
is antisymmetric and satisfies the relation \eqref{311} (see \cite[Chapter 4]{c} for details).
We are now ready to prove Theorem \ref{Theorem 2}.

\begin{proof}[Proof of Theorem \ref{Theorem 2}]
Without loss of generality, let us assume that $ \mathbb{E}_{\pi} f=0$. Let $X=Z(0) \sim \pi$ and $X'=Z(1)$ be the step 1 distribution of GD starting from $X$. By reversibility of $P$ (see \eqref{711}), $(X,X')$ is an exchangeable pair. Let us define the antisymmetric function $F$ via   \eqref{712}, and subsequently the function $g$ as in  \eqref{713}.
The first order of business is to obtain an $L^\infty$ bound on the function $g$. We will in fact prove that \begin{align}\label{linf}
2|g(x)|  \leq C _0 \norm{\textup{\textbf{v}}}_1 \norm{\textup{\textbf{v}}}_\infty,
\end{align}
for some constant $C_0>0$ independent of $n$. This in conjunction with Theorem \ref{stein}, completes the proof of the theorem. The rest of the proof will be devoted to verifying \eqref{linf}.
By \eqref{713} 
\begin{align} \label{600}
2|g(x)|&= \mathbb{E} \left[\Big|(f(X)-f(X'))F(X,X')\Big||X=x\right] \nonumber \\
&{\leq} \frac{1}{M}\sum_{l=1}^M  \left|(f(x_{l+})-f(x_{l-}))F(x_{l+},x_{l-})\right|.
\end{align}
For each edge $l$, let us obtain an upper bound of $ |(f(x_{l+})-f(x_{l-}))F(x_{l+},x_{l-})|$.
For any configuration $x$ and edge $l$, let $X^{l,+}(t)=Z^{x_{l+}}(t)$ and $X^{l,-}(t)=Z^{x_{l-}}(t)$ be the GD starting from the initial configurations $x_{l+}$ and $x_{l-}$, respectively. Then, since $f$ is $\bf{v}$-Lipschitz,  we have 
\begin{align*}
 |f(x_{l+})-f(x_{l-})| |F(x_{l+},x_{l-})|&\leq v_l  \sum_{t=0}^\infty | \mathbb{E} f(X^{l,+}(t)) - \mathbb{E} f(X^{l,-}(t))|.
\end{align*}
Note that for each $t$ and any coupling of $X^{l,+}(t)$ and $X^{l,-}(t)$,
\begin{align*}
| \mathbb{E} f(X^{l,+}(t)) - \mathbb{E} f(X^{l,-}(t))| \leq \sum_{j=1}^M  v_j \mathbb{P} (X^{l,+}_j(t)\neq X^{l,-}_j(t)),
\end{align*} {
which implies that
\begin{align} \label{707}
 |f(x_{l+})-f(x_{l-})| |F(x_{l+},x_{l-})|\leq   v_l  \sum_{t=0}^\infty \sum_{j=1}^M  v_j \mathbb{P} (X^{l,+}_j(t)\neq X^{l,-}_j(t)).
\end{align} }
We will couple $X^{l,+}(t)$ and $X^{l,-}(t)$ through the {monotone coupling}. For such a coupling, let {$\textup{\textbf{r}}(x,l,t)$}  be the $M$-dimensional vector with $\textup{\textbf{r}}(x,l,t)_i := \mathbb{P}(X^{l,+}_i(t)\neq X^{l,-}_i(t))$.  Then,
\begin{align}\label{79}
\sum_{j=1}^M  v_j \mathbb{P} (X^{l,+}_j(t)\neq X^{l,-}_j(t)) \leq    \norm{\textup{\textbf{r}}(x,l,t)}_1 \norm{\textup{\textbf{v}}}_\infty  .
\end{align}
Since $\norm{L}_1$ { (recall that $L$ is defined in \eqref{crucialdef23})} is bounded above by a constant independent of $n$ (see \eqref{211}), using \eqref{contraction2} {and the fact $\|r(x,l,0)\|_1=1$}, there exists a constant $C>0$ such that for $ t\leq cn^2$, 
\begin{align*}
\norm{\textup{\textbf{r}}(x,l,t)}_1 \leq  \Big(1-\frac{1}{M}+\frac{\norm{L}_1}{M}\Big)^t \norm{\textup{\textbf{r}}(x,l,0)}_1 \leq C.
\end{align*}
Thus, it follows by \eqref{79} that
\begin{align} \label{703}
t\leq cn^2 \Rightarrow \sum_{j=1}^M  v_j \mathbb{P} (X^{l,+}_j(t)\neq X^{l,-}_j(t)) \leq C \norm{\textup{\textbf{v}}}_\infty.
\end{align}
Also, since $\norm{\textup{\textbf{r}}({x,l,cn^2})}_1\leq C$, by \eqref{235'} and the fact  
\begin{align*}
\mathbb{E}[d_H(X^{l,+}(t), X^{l,-}(t) )] = \mathbb{E} \Big[\sum_{j=1}^M \1 \{X^{l,+}_j(t) \neq X^{l,-}_j(t) \}\Big] =  \norm{\textup{\textbf{r}}(x,l,t)}_1,
\end{align*}
  for $cn^2< t< n^3 $,  
\begin{align} \label{702}
 \sum_{j=1}^M  v_j \mathbb{P} (X^{l,+}_j(t)\neq X^{l,-}_j(t))  &\leq  
\norm{\textup{\textbf{r}}(x,l,t)}_1 \norm{\textup{\textbf{v}}}_\infty   \leq  \Big(\big(1-\frac{\delta}{M}\big)^{t-cn^2}C +  e^{-\Omega(n)}\Big)\norm{\textup{\textbf{v}}}_\infty .
\end{align}

Now note that for $t\geq n^3$,
\begin{align}   \label{705}
\mathbb{P}(X^{l,+}(t)\neq X^{l,-}(t))\leq e^{-\alpha t/n^2}
\end{align}
for some $\alpha>0$.
{The above bound follows by plugging in $t=n^3$ in \eqref{235} and using the
sub-multiplicative property of the quantity  $\sup_{\mu,\nu \in \cP(\cG_n)} d_{TV} (\mu_t,\nu_t)$ along with monotonicity (see \eqref{237}).}  

By  \eqref{79} and \eqref{705}, 
\begin{align} \label{7011}
t\geq n^3 \Rightarrow \sum_{j=1}^M  v_j \mathbb{P} (X^{l,+}_j(t)\neq X^{l,-}_j(t)) \leq   n^2 e^{-\alpha t/n^2} \norm{\textup{\textbf{v}}}_\infty  \leq  e^{-\alpha' t/n^2} \norm{\textup{\textbf{v}}}_\infty
\end{align}
for some $\alpha'>0$ chosen slightly smaller than $\alpha$ to absorb the $n^{2}$ pre-factor.

Therefore,
applying  \eqref{703}, \eqref{702} and \eqref{7011}  to \eqref{707}, {
\begin{align*}
 |f(x_{l+})-f(x_{l-})|& |F(x_{l+},x_{l-})| \\
 &\leq v_l  \sum_{t=0}^\infty \sum_{j=1}^M  v_j \mathbb{P} (X^{l,+}_j(t)\neq X^{l,-}_j(t)) \\
 &\leq v_l  \norm{\textup{\textbf{v}}}_\infty  \Big( \sum_{t=0}^{cn^2}  C  +  \sum_{t=cn^2+1}^{n^3-1}  \Big(\big(1-\frac{\delta}{M}\big)^{t-cn^2}C +  e^{-\Omega(n)}\Big) +  \sum_{t=n^3}^{\infty}   e^{-\alpha' t/n^2} \Big) \\
 &\leq v_l   \norm{\textup{\textbf{v}}}_\infty  O(n^2).
\end{align*} 
 Recalling that  $M$ is of order $n^2$,}
this implies that for some constant $C_0>0$, 
\begin{align*}
2|g(x)| \leq \frac{1}{M}\sum_{l=1}^M |f(x_{l+})-f(x_{l-})| |F(x_{l+},x_{l-})|\leq C_0 \norm{\textup{\textbf{v}}}_1 \norm{\textup{\textbf{v}}}_\infty,
\end{align*}
which in conjunction with Theorem \ref{stein}, implies that for  $t\geq 0$,
\begin{align} \label{710}
\mathbb{P}(|f(X)| \geq t) \leq 2e^{- t^2 /  C_0 \norm{\textup{\textbf{v}}}_1 \norm{\textup{\textbf{v}}}_\infty}.
\end{align} 
\end{proof}

\section{Central limit theorem: Proof of Theorem \ref{Theorem 3}} \label{section 8}

The proof is at a very high level based on the moment method and will rely on delicate and novel estimates of the $k$-correlation functions of the form, $\mathbb{E} [(X_{i_1}- \mathbb{E} X_{i_1})\cdots (X_{i_k}- \mathbb{E} X_{i_k})] $ for edges $i_1,i_2,\ldots i_k$.  We first start with a few related results that will be used.
\begin{lemma} For any fixed positive integer $k$ and distinct edges $i_1,\cdots,i_k$, 
\begin{align} \label{720}
|\mathbb{E} [X_{i_1}\cdots X_{i_k}]  - (\mathbb{E}X_{i_1})\cdots(\mathbb{E}X_{i_k}) | = O\Big(\frac{1}{n}\Big).
\end{align}
\end{lemma}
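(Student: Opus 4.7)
The plan is to reduce the $k$-fold product estimate to repeated application of the quantitative conditional-marginal bound \eqref{quantind} via a standard telescoping identity. For $m=1,\ldots,k+1$, I would set
\begin{equation*}
A_m := \mathbb{E}[X_{i_m}X_{i_{m+1}}\cdots X_{i_k}] \cdot \prod_{j<m}\mathbb{E}[X_{i_j}],
\end{equation*}
with the empty product equal to $1$, so that $A_1 = \mathbb{E}[X_{i_1}\cdots X_{i_k}]$ and $A_{k+1}=\prod_{j=1}^k\mathbb{E}[X_{i_j}]$. Rewriting the target quantity as the telescoping sum $\sum_{m=1}^k (A_m-A_{m+1})$, a direct computation gives
\begin{equation*}
A_m - A_{m+1} = \Cov\bigl(X_{i_m},\,X_{i_{m+1}}\cdots X_{i_k}\bigr)\cdot \prod_{j<m}\mathbb{E}[X_{i_j}].
\end{equation*}
Hence it suffices to show that each of the $k$ covariances is of order $1/n$.

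To bound a single covariance, I would factor
\begin{equation*}
\Cov\bigl(X_{i_m},\,X_{i_{m+1}}\cdots X_{i_k}\bigr) = \mathbb{P}(X_{i_{m+1}}=1,\ldots,X_{i_k}=1)\cdot\bigl[\mathbb{P}(X_{i_m}=1\mid X_{i_{m+1}}=1,\ldots,X_{i_k}=1) - \mathbb{P}(X_{i_m}=1)\bigr],
\end{equation*}
and then invoke \eqref{quantind} (that is, Proposition \ref{prop 5.1}) to conclude that the bracketed difference is $O(1/n)$ uniformly. Since the leading probability and the product $\prod_{j<m}\mathbb{E}[X_{i_j}]$ are both bounded by $1$, each summand $A_m-A_{m+1}$ is $O(1/n)$.

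Summing the $k$ terms for the fixed integer $k$ yields the claimed $O(1/n)$ bound. The entire difficulty of the lemma is thus pushed onto Proposition \ref{prop 5.1}, which in turn is to be established via the Gaussian concentration result Theorem \ref{Theorem 2} combined with the FKG inequality, as anticipated in Section \ref{iop}. The present lemma is essentially a bookkeeping step; the only subtlety I would flag is that one needs the conditional-marginal estimate to hold \emph{uniformly} in the conditioning values $a_j\in\{0,1\}$ for a conditioning set of fixed size, a property already built into the statement of \eqref{quantind}.
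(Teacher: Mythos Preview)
Your telescoping identity and the conditional-probability factoring are both correct, and the overall strategy is sound. The paper also telescopes via the same covariances $\Cov(X_{i_1}\cdots X_{i_{j-1}},X_{i_j})$, but bounds them differently: instead of invoking Proposition~\ref{prop 5.1}, it applies Newman's positive-association covariance inequality \eqref{619} (valid by FKG) to reduce each such covariance to a sum of pairwise covariances $\Cov(X_{i_\ell},X_{i_j})$, which are controlled directly by the Gaussian concentration estimate \eqref{gaussian} and symmetry (yielding \eqref{612}, \eqref{613}). Your route instead passes through the conditional--marginal bound of Proposition~\ref{prop 5.1}.

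There is one logical point you must address explicitly: in the paper's ordering, Proposition~\ref{prop 5.1} is stated and proved \emph{after} this lemma, and its proof uses the pairwise covariance bounds \eqref{612} and \eqref{613} (together with \eqref{619} and \eqref{independent}). If you simply cite Proposition~\ref{prop 5.1} as a black box, the argument looks circular. The resolution is that Proposition~\ref{prop 5.1} actually depends only on the $k=2$ case of the present lemma, not on \eqref{720} in full; so your proof is valid provided you first establish the pairwise bounds \eqref{612}, \eqref{613} directly from Theorem~\ref{Theorem 2} plus FKG and symmetry (exactly as the paper does), then prove Proposition~\ref{prop 5.1}, and only then run your telescoping argument for $k\geq 3$. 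You should make this ordering explicit rather than leave it implicit. Once that is done, your route and the paper's are essentially equivalent---both ultimately rest on Newman's inequality and the pairwise covariance bounds---with the paper's being slightly more direct and yours making the dependence on the quantitative independence result more transparent.
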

\begin{proof} The case $k=2$ easily follows from the Gaussian concentration. {In fact, since the Lipschitz vector of the function $X_1+\cdots+X_M$ is $(1,\cdots,1)$,
Theorem \ref{Theorem 2} implies that }
\begin{align} \label{gaussian}
\text{Var}(X_1+\cdots+X_M) = O(M).
\end{align}
By symmetry, one can deduce that 
if two edges $i$ and $j$ does not share a vertex, then
\begin{align} \label{612}
\text{\text{\text{Cov}}}(X_i,X_j) = O\Big(\frac{1}{n^2}\Big),
\end{align}
whereas if two {distinct} edges $i$ and $j$  share a vertex, then
\begin{align} \label{613}
\text{\text{Cov}}(X_i,X_j) = O\Big(\frac{1}{n}\Big).
\end{align}
Above we use the fact that $\text{Cov}(X_i,X_j)\geq 0$ by FKG inequality.
In the case $k\geq 3$, we will crucially rely on the following estimate.

\noindent
\textbf{Fact} \cite[Equation (12)]{n}: {If random variables $(Z_1,\cdots,Z_m)$ satisfy the positive  quadrant dependent condition, i.e., for all $x,y \in \R,$ }
\begin{align} \label{positive  quadrant}
\mathbb{P}(Z_i > x, Z_j > y) \geq \mathbb{P}(Z_i > x)\mathbb{P}(Z_j > y),
\end{align}then there exists a constant $C>0$ such that for any {$C^1$ functions} $f,g$ with bounded partial derivatives,
\begin{align} \label{619}
|\text{Cov}(f(Z_1,\cdots,Z_m),g(Z_1,\cdots,Z_m))| \leq C \sum_{i=1}^m \sum_{j=1}^m \norm{\frac{\partial f}{\partial z_i}}_\infty  \norm{\frac{\partial g}{\partial z_j}}_\infty \text{Cov}(Z_i,Z_j).
\end{align}
{ Since $(X_{i_1},\cdots,X_{i_j})$ satisfy \eqref{positive  quadrant} by FKG inequality,}
 using \eqref{619} with \eqref{612} and \eqref{613}, 
\begin{align*}
|\text{Cov}(X_{i_1}\cdots X_{i_{j-1}},X_{i_j}) |=  O\Big(\frac{1}{n}\Big)
\end{align*}
for any $j$ {(this can be done by choosing $g(x)=x$ and $f(x_1,\cdots,x_{j-1}) = \varphi(x_1,\cdots,x_{j-1})  x_1\cdots x_{j-1}$ with a compactly supported smooth function $\varphi$ satisfying $\varphi=1$ on $[-2,2]^{j-1}$)}. Using this fact repeatedly for $1\le j\le k$,  we obtain \eqref{720}. 
\end{proof}

Next, we derive the following quantitative independence result, which will be a key ingredient to obtain $k$-correlation estimates.

\begin{proposition} \label{prop 5.1}
For any fixed positive integer $k$ and  $a_1,\cdots,a_k,b_1,\cdots,b_k \in \{0,1\}$, the following statement holds. If the edge $j$ does not share a vertex with edges $i_1,\cdots,i_k$, then
\begin{align}  \label{610}
\Big\vert \mathbb{P}(X_{j} = 1 | X_{i_1}=a_1,\cdots,X_{i_k}=a_k) -  \mathbb{P}(X_{j} = 1 | X_{i_1}=b_1,\cdots,X_{i_k}=b_k) \Big\vert= O\Big(\frac{1}{n^2}\Big).
\end{align}
Whereas,  if the edge $j$  shares  vertices with some of the edges $i_1,\cdots,i_k$, then
\begin{align}  \label{611}
\Big\vert\mathbb{P}(X_{j} = 1 | X_{i_1}=a_1,\cdots,X_{i_k}=a_k) -  \mathbb{P}(X_{j} = 1 | X_{i_1}=b_1,\cdots,X_{i_k}=b_k) \Big\vert= O\Big(\frac{1}{n}\Big).
\end{align}

\end{proposition}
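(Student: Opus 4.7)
The plan is to reduce each conditional probability to a ratio of unconditional expectations, identify the difference as a single covariance, and control that covariance via the Newman-type inequality \eqref{619} that was already used in the proof of \eqref{720}. The vertex-disjointness hypothesis will enter only through the pairwise covariance estimates \eqref{612}--\eqref{613}, which deteriorate by a factor of $n$ as soon as a pair of edges shares a vertex.

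Concretely, for $c=(c_1,\ldots,c_k)\in\{0,1\}^k$ define the polynomial $\phi_c(x):=\prod_{l=1}^k x_{i_l}^{c_l}(1-x_{i_l})^{1-c_l}$, which on $\{0,1\}^M$ is the indicator of $\{x_{i_1}=c_1,\ldots,x_{i_k}=c_k\}$. With $f_c:=\mathbb{E}[\phi_c(X)]$ and $g_c:=\mathbb{E}[X_j\,\phi_c(X)]$ the conditional probability in question is $g_c/f_c$, and adding and subtracting $\mathbb{E}[X_j] f_a f_b$ inside the cross-difference gives
\begin{align*}
\frac{g_a}{f_a}-\frac{g_b}{f_b}=\frac{\Cov(X_j,\phi_a(X))\,f_b-\Cov(X_j,\phi_b(X))\,f_a}{f_a f_b},
\end{align*}
since $g_c-\mathbb{E}[X_j]f_c=\Cov(X_j,\phi_c(X))$. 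Thus the task reduces to bounding the two numerator covariances and to showing $f_a f_b=\Theta(1)$.

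For the denominator, expand $\phi_c$ into a signed sum of monomials in the $X_{i_l}$'s and apply \eqref{720} term by term; this gives $f_c=\prod_{l=1}^k\mathbb{P}(X_{i_l}=c_l)+O(n^{-1})$, and since $\mathbb{P}(X_{i_l}=1)\to p^*\in(0,1)$ (Remark \ref{remark 6.3}) every factor is bounded away from $0$ and $1$, so $f_a f_b=\Theta(1)$ for large $n$. For the numerator, I would extend $\phi_c$ to a $C^1$ function on $\R^M$ by multiplying by a smooth cutoff equal to $1$ on $[-2,2]^M$ (the same device as in the proof of \eqref{720}), after which all partial derivatives are bounded by a constant depending only on $k$, and only the $k$ partials at coordinates $i_1,\ldots,i_k$ are nonzero. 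FKG (available since $\vec{\beta}>0$) supplies the positive-quadrant-dependence hypothesis of \eqref{619}, which, applied with $f(x)=x_j$ and $g(x)=\phi_c(x)$, yields
\begin{align*}
0\le\Cov(X_j,\phi_c(X))\le C\sum_{l=1}^k \Cov(X_j,X_{i_l}),
\end{align*}
where the non-negativity on the left is another application of FKG (allowing absolute values to be dropped).

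Plugging the pairwise estimates \eqref{612}--\eqref{613} into this sum finishes the argument: if $j$ is vertex-disjoint from every $i_l$, each summand is $O(n^{-2})$ so the sum is $O(k n^{-2})=O(n^{-2})$, giving \eqref{610}; otherwise each of the finitely many terms where $j$ shares a vertex with some $i_l$ contributes $O(n^{-1})$ and dominates, giving \eqref{611}. I do not anticipate a serious obstacle; the only mildly delicate point is checking that the $L^\infty$-bounds on the partials of the smoothly extended $\phi_c$ remain finite (they do, depending only on the fixed $k$ and the choice of cutoff), and the only genuinely new ingredient beyond the proof of \eqref{720} is the identity expressing the conditional-probability difference as a ratio of covariances.
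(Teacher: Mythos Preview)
Your approach is correct and, up to one harmless slip, complete. The slip: you assert $\Cov(X_j,\phi_c(X))\ge 0$ ``by FKG,'' but $\phi_c$ is monotone only when all $c_l$ are equal, so FKG does not give this in general. Fortunately you do not need it --- inequality \eqref{619} already bounds $|\Cov|$, so simply replace the displayed lower bound $0$ by an absolute value and drop the parenthetical remark.

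By way of comparison, the paper's proof has the same skeleton (reduce to a covariance, invoke Newman's inequality \eqref{619}, feed in the pairwise bounds \eqref{612}--\eqref{613}, use \eqref{independent} for the denominator), but organizes the covariance step differently: it compares each conditional probability to the unconditional $\mathbb{P}(X_j=1)$ and then bounds $\Cov(X_j,\mathbf{1}\{X_{i_1}=a_1,\ldots,X_{i_k}=a_k\})$ by induction on the number of zeros among the $a_l$'s, applying \eqref{619} only in the base case where all $a_l=1$ (so the indicator is the monotone monomial $X_{i_1}\cdots X_{i_k}$). Your route --- applying \eqref{619} once, directly to the smoothed polynomial $\phi_c$, whose only nonzero partials sit at $i_1,\ldots,i_k$ and are bounded in terms of $k$ alone --- is more economical and avoids the induction entirely. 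The paper's detour has the mild advantage that in its base case $\phi_c$ really is increasing, but since \eqref{619} carries absolute values anyway, that monotonicity is never actually used.
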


\begin{remark} \label{remark 6.3}
A qualitative version of Proposition \ref{prop 5.1} is obtained in  
\cite[Thoerem 7]{bbs} which states that if ${\vec{\beta}}$ lies in the high temperature regime, and  $p^*$ is the unique maximizer of the variational formula \eqref{variational}, then, for any fixed positive integer $k$,
\begin{align}  \label{independent}
\lim_{n\rightarrow \infty} \mathbb{P}(X_1=a_1,\cdots,X_k=a_k) = (p^*)^{\sum_i a_i}(1-p^*)^{k-\sum_i a_i}, 
\end{align}
but does not provide any fine information about the covariance structure.
\end{remark}

\begin{proof}[Proof of Proposition \ref{prop 5.1}]
Let us first consider the case when  the edge $j$ does not share a vertex with edges $i_1,\cdots,i_k$. In order to prove \eqref{610}, it suffices to check that
\begin{align} \label{614}
\Big\vert \mathbb{P}(X_{j} = 1 | X_{i_1}=a_1,\cdots,X_{i_k}=a_k) -  \mathbb{P}(X_{j} = 1) \Big\vert = O\Big(\frac{1}{n^2}\Big).
\end{align}
Due to \eqref{independent}, it reduces to prove that
\begin{align} \label{615}
\Big\vert\mathbb{P}(X_{j} = 1, X_{i_1}=a_1,\cdots,X_{i_k}=a_k) -  \mathbb{P}(X_{j} = 1)\mathbb{P}(X_{i_1}=a_1,\cdots,X_{i_k}=a_k) \Big\vert= O\Big(\frac{1}{n^2}\Big).
\end{align}
Let $l$ be the number of 0's in $a_1,\cdots,a_k$. Let us prove \eqref{615} by the induction on $l$. Suppose that $l=0$, in  other words, $a_1=\cdots=a_k=1$. Then, the LHS of \eqref{615} is equal to
\begin{align*}
\text{Cov}(X_j,X_{i_1}\cdots X_{i_k}).
\end{align*}
{Since $(X_{i_1},\cdots,X_{i_k},X_j)$ satisfies the condition \eqref{positive  quadrant} due to FKG inequality,} applying \eqref{619}  to $\text{Cov}(X_j,X_{i_1}\cdots X_{i_k})$ and using \eqref{612},
\begin{align*}
|\text{Cov}(X_j,X_{i_1}\cdots X_{i_k})| \leq C( \text{Cov}(X_j,X_{i_1})+\cdots+ \text{Cov}(X_j,X_{i_k})) = O\Big(\frac{1}{n^2}\Big).
\end{align*}
This concludes the proof of \eqref{615} when $l=0$.
Suppose that \eqref{615} holds for $l$, and let us prove it for $l+1$. Without loss of generality, assume that $a_k=0$. Then,
\begin{align*}
&\mathbb{P}(X_{j} = 1, X_{i_1}=a_1,\cdots,X_{i_k}=a_k) -  \mathbb{P}(X_{j} = 1)\mathbb{P}(X_{i_1}=a_1,\cdots,X_{i_k}=a_k) \\
&=  \mathbb{P}(X_{j} = 1, X_{i_1}=a_1,\cdots,X_{i_{k-1}}=a_{k-1}) - \mathbb{P}(X_{j} = 1, X_{i_1}=a_1,\cdots,X_{i_{k-1}}=a_{k-1},X_{i_k}=1) \\
&\qquad -  \mathbb{P}(X_{j} = 1) ( \mathbb{P}(X_{i_1}=a_1,\cdots,X_{i_{k-1}}=a_{k-1})- \mathbb{P}(X_{i_1}=a_1,\cdots,X_{i_{k-1}}=a_{k-1},X_{i_k}=1)).
\end{align*}
By the induction hypothesis, the absolute value of the above expression is $O(\frac{1}{n^2})$. This concludes  the proof of \eqref{615} for general $l$.
The proof of \eqref{611}  when the edge $j$  shares  vertices with some of the edges $i_1,\cdots,i_k$ is  same as above, except  that
\begin{align*}
|\text{Cov}(X_j,X_{i_1}\cdots X_{i_k})| \leq C( \text{Cov}(X_j,X_{i_1})+\cdots+ \text{Cov}(X_j,X_{i_k})) = O\Big(\frac{1}{n}\Big)
\end{align*}
by \eqref{613} and \eqref{619}.
\end{proof}

Using the quantitative  independence result Proposition \ref{prop 5.1} and  Theorem \ref{Theorem 2}, we obtain the following estimate on the $k$-correlation $\mathbb{E} [(X_{i_1}- \mathbb{E} X_{i_1})\cdots (X_{i_k}- \mathbb{E} X_{i_k})] $.
\begin{proposition} \label{prop 5.2}
{
 For any given integers $l,m \geq 0$ and $a_1,\cdots,a_m\geq 1$ satisfying $a_1+\cdots+a_m\leq 2m$, there exists a constant $C_{l,a_1,\cdots,a_m}>0$ such that the following statement holds for sufficiently large $n$: for any set of  edges   $i_1,\cdots,i_m,j_1,\cdots,j_l$  not sharing a vertex, 
\begin{align} \label{621}
\big\vert  \mathbb{E} [(X_{i_1}- \mathbb{E} X_{i_1})^{a_1}\cdots (X_{i_m}- \mathbb{E} X_{i_m})^{a_m} | X_{j_1},\cdots,X_{j_l}]\big\vert \leq \frac{C_{l,a_1,\cdots,a_m}}{n^{m-(a_1+\cdots+a_m)/2}}.
\end{align}
In particular, for any given positive integer $k$, for $k$ different edges $i_1,\cdots,i_k$ who do not share a vertex,
\begin{align} \label{620}
\big\vert \mathbb{E} [(X_{i_1}- \mathbb{E} X_{i_1})\cdots (X_{i_k}- \mathbb{E} X_{i_k})] \big\vert= O\Big(\frac{1}{n^{k/2}}\Big).
\end{align}
}
\end{proposition}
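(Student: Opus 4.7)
The plan is to prove Proposition \ref{prop 5.2} by induction on $m$. Rewriting the target exponent as $m - (a_1 + \cdots + a_m)/2 = \tfrac{1}{2}\sum_\ell (2 - a_\ell)$, one sees that it receives a $+\tfrac{1}{2}$ contribution from each singleton index ($a_\ell = 1$), $0$ from each $a_\ell = 2$, and negative contributions from $a_\ell \ge 3$; the hypothesis $\sum_\ell a_\ell \le 2m$ is precisely what makes the exponent non-negative. When all $a_\ell \ge 2$ the bound is vacuous (since $|\prod_\ell Y_{i_\ell}^{a_\ell}| \le 1$), so the essential task is to extract on average $n^{-1/2}$ of savings per singleton factor.

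I would first reduce the general case to the pure-singleton case $a_\ell \equiv 1$ via the algebraic identity $Y_{i_\ell}^2 = p(1-p) + (1-2p) Y_{i_\ell}$ (immediate from $X_{i_\ell}^2 = X_{i_\ell}$). Iterating this identity, every $Y_{i_\ell}^{a_\ell}$ with $a_\ell \ge 2$ becomes a bounded affine function of $Y_{i_\ell}$, so
\[
\prod_{\ell=1}^m Y_{i_\ell}^{a_\ell} = \sum_{T \supseteq S^*} C_T \prod_{\ell \in T} Y_{i_\ell}, \qquad S^* := \{\ell: a_\ell = 1\},
\]
for uniformly bounded coefficients $C_T$. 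Taking conditional expectations, the desired estimate follows if every pure-singleton conditional correlation over a vertex-disjoint set $T$ satisfies $|\mathbb{E}[\prod_{\ell \in T} Y_{i_\ell} \mid X_{j_1}, \ldots, X_{j_l}]| \le Cn^{-|T|/2}$: the dominant term ($|T| = s := |S^*|$) then yields $Cn^{-s/2}$, matching or exceeding the target exponent $(s - T_0)/2$ with $T_0 := \sum_{\ell: a_\ell \ge 3}(a_\ell - 2) \ge 0$.

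For the pure-singleton bound, I would apply Theorem \ref{Theorem 2} to $S := \sum_{e \in \mathcal{M}} Y_e$, where $\mathcal{M}$ is a maximal vertex-disjoint matching in $K_n$ (so $|\mathcal{M}| \asymp n$), obtaining $|\mathbb{E}[S^k]| \le C_k n^{k/2}$ for every fixed $k$. Expanding $\mathbb{E}[S^k] = \sum_{(l_1, \ldots, l_k)} \mathbb{E}[\prod_j Y_{e_{l_j}}]$ and grouping ordered tuples by the set partition of $[k]$ determined by their coincidences, the vertex-permutation symmetry of the ERGM identifies all terms having the same ``shape'' (multiset of block sizes $(b_1, \ldots, b_r)$), yielding
\[
\mathbb{E}[S^k] = (1+o(1))\,|\mathcal{M}|^k\, \mu_k + \sum_{r<k} O(|\mathcal{M}|^r)\, \mu_{(b_1,\ldots,b_r)},
\]
where $\mu_k := \mathbb{E}[Y_{i_1} \cdots Y_{i_k}]$ is the desired $k$-correlation and $\mu_{(b_1,\ldots,b_r)}$ denotes the analogous correlation over $r$ vertex-disjoint edges with powers $b_1, \ldots, b_r$. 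By the inductive hypothesis applied with smaller $m = r < k$, $|\mu_{(b_1,\ldots,b_r)}| \le C n^{-(r - k/2)}$ whenever $r \ge k/2$ (and is trivially bounded by $1$ otherwise), so each remainder term contributes at most $O(n^r \cdot n^{k/2-r}) = O(n^{k/2})$. Summing over the (in $n$) bounded number of shapes gives $|\mathcal{M}|^k |\mu_k| \le C_k n^{k/2}$, and hence $|\mu_k| \le C_k n^{-k/2}$.

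The main obstacle is handling the conditioning on $X_{j_1}, \ldots, X_{j_l}$ throughout the pure-singleton argument. One route is to prove a conditional analogue of Theorem \ref{Theorem 2}: conditioning on $O(1)$ edges perturbs the Hamiltonian only by $O(n^{-1})$ in the relevant norms, so the temporal mixing inputs from \cite{bbs} should persist uniformly in the conditioning and deliver a conditional Gaussian moment bound of the same order. Alternatively, one may control the gap between conditional and unconditional correlations by iterating Proposition \ref{prop 5.1} one conditioning variable at a time. Either way, the combinatorial bookkeeping in the shape expansion is straightforward but notationally tedious.
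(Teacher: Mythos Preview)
Your proposal is correct and follows essentially the same inductive architecture as the paper: induct on $m$, handle the pure-singleton case $a_\ell\equiv 1$ by applying the Gaussian concentration bound to the $k$-th moment of a sum over $\Theta(n)$ vertex-disjoint edges, expand, control the lower-order shapes via the inductive hypothesis, and extract the top term by symmetry. Two points where the paper is more efficient than your write-up are worth noting.

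First, for the reduction from general $(a_1,\ldots,a_m)$ to the pure-singleton case, the paper does not use your algebraic identity $Y^2=p(1-p)+(1-2p)Y$. Instead, whenever some $a_1\ge 2$, it simply writes
\[
\mathbb{E}\bigl[\widetilde X_{i_1}^{a_1}\cdots \widetilde X_{i_m}^{a_m}\,\big|\,X_{j_1},\ldots,X_{j_l}\bigr]
=\mathbb{E}\Bigl[\widetilde X_{i_1}^{a_1}\,\mathbb{E}\bigl[\widetilde X_{i_2}^{a_2}\cdots \widetilde X_{i_m}^{a_m}\,\big|\,X_{j_1},\ldots,X_{j_l},X_{i_1}\bigr]\,\Big|\,X_{j_1},\ldots,X_{j_l}\Bigr],
\]
absorbs $X_{i_1}$ into the conditioning (so $l\to l+1$, $m\to m-1$), and applies the inductive hypothesis directly; since $a_1\ge 2$, the exponents match. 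Your expansion over $T\supseteq S^*$ is valid but unnecessary.

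Second, what you flag as ``the main obstacle'' --- conditional Gaussian concentration --- the paper dispatches in one line. By the asymptotic independence result \eqref{independent}, $\mathbb{P}(X_{j_1}=b_1,\ldots,X_{j_l}=b_l)$ is bounded below by a positive constant $c_l$ for all large $n$, so for every event $\mathcal A$,
\[
\mathbb{P}(\mathcal A\mid X_{j_1},\ldots,X_{j_l})\le c_l^{-1}\,\mathbb{P}(\mathcal A).
\]
This immediately upgrades the unconditional tail bound from Theorem~\ref{Theorem 2} to a conditional one, yielding $\bigl|\mathbb{E}\bigl[(\widetilde X_{l+1}+\cdots+\widetilde X_{[n/2]})^k\mid X_1,\ldots,X_l\bigr]\bigr|=O(n^{k/2})$ with no need to re-run the mixing analysis for the conditioned measure or to iterate Proposition~\ref{prop 5.1}.
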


\begin{proof}
{
It suffices to simply prove  \eqref{621}, since \eqref{620} directly follows by taking $m=k, l=0, a_1=a_2\ldots=a_m=1$.}
Define $\widetilde{X}_i:=X_i- \mathbb{E} X_i $, and let us  prove \eqref{621} by the induction on $m$. Without loss of generality, we assume that $j_1=1,\cdots,j_l=l$, $\{i_1,\cdots,i_m\} \subset \{l+1,\cdots,[n/2]\}$, and the edges $1,\cdots, [n/2]$ do not share a vertex. The case when $m=1$ is obvious. In fact, according to Proposition \ref{prop 5.1},  for any fixed $l\geq 0$,
\begin{align*}
\big\vert \mathbb{E} [\widetilde{X}_{i_1} | X_1,\cdots,X_l] \big\vert= O\Big(\frac{1}{n}\Big),
\end{align*}
which implies \eqref{621} for $a_1=1$. Also, it is obvious that $|\widetilde{X}_{i_1}|\leq 1$, which implies \eqref{621} in the case $a_1=2$.

Now suppose that  \eqref{621} is true for $m\leq k-1$, and let us prove it for $m=k$. First, consider the case when at least one of $a_1,\cdots,a_k$ is greater that one. Assuming that $a_1\geq 2$, we have
\begin{align} \label{622}
 \mathbb{E}[\widetilde{X}_{i_1}^{a_1}\cdots \widetilde{X}_{i_k}^{a_k} | X_1,\cdots,X_l] =   \mathbb{E}\big[\widetilde{X}_{i_1}^{a_1} \mathbb{E}[\widetilde{X}_{i_2}^{a_2}\cdots \widetilde{X}_{i_k}^{a_k} | X_1,\cdots,X_l,X_{i_1}] | X_1,\cdots,X_l\big].
\end{align}
Since $a_2+\cdots+a_k\leq 2(k-1)$,  by the induction hypothesis,
\begin{align} \label{623}
\big\vert  \mathbb{E}[\widetilde{X}_{i_2}^{a_2}\cdots \widetilde{X}_{i_k}^{a_k} | X_1,\cdots,X_l,X_{i_1}] \big\vert \leq  \frac{C_{l+1,a_2,\cdots,a_k}}{n^{m-1-(a_2+\cdots+a_k)/2}} \leq \frac{C_{l+1,a_2,\cdots,a_k}}{n^{m-(a_1+\cdots+a_k)/2}}.
\end{align}
Since $|\widetilde{X}_{i_1}|\leq 1$, \eqref{622} and \eqref{623} conclude the proof of \eqref{621}.
Now, let us consider the case when $a_1=\cdots=a_k=1$. In other words, let us prove that for any fixed $l\geq 0$,
\begin{align} \label{624}
\big\vert \mathbb{E} [\widetilde{X}_{i_1}\cdots \widetilde{X}_{i_k} | X_1,\cdots,X_l]\big\vert = O\Big(\frac{1}{n^{k/2}}\Big).
\end{align}
According to the Gaussian concentration  Theorem \ref{Theorem 2}, 
\begin{align} \label{625}
\mathbb{P}(|\widetilde{X}_{l+1} + \cdots + \widetilde{X}_{[n/2]}|>t)\leq 2 \exp\Big(-\frac{ct^2}{[n/2]-l}\Big)
\end{align}
{(Since the Lipschitz vector of $X_{l+1}+\cdots+X_{[n/2]}$ is simply a vector with $1$ corresponding to the edges $l+1$ to $[n/2]$ and $0$ everywhere else, the $L^1$ and $L^\infty$ norms are $[n/2]-l$ and $1$ respectively).}
Note that due to \eqref{independent}, for any  $l\geq 0$, there exists a constant $C_l>0$ such that for sufficiently large $n$ and any event $\mathcal{A}$, 
\begin{align*}
\mathbb{P}(\mathcal{A}| X_1,\cdots,X_l) \leq C_l \mathbb{P}(\mathcal{A}).
\end{align*} 
Combining this  with \eqref{625}, for any fixed integers $l\geq 0$ and $k\geq 1$, we have the conditional concentration result
\begin{align} \label{626}
\Big\vert \mathbb{E} \big[(\widetilde{X}_{l+1} + \cdots + \widetilde{X}_{[n/2]})^k | X_1,\cdots,X_l\big] \Big\vert= O(n^{k/2}).
\end{align}
Expanding left-hand-side of \eqref{626}, we have terms like $\mathbb{E} [\widetilde{X}_{b_1}^{c_1}\cdots \widetilde{X}_{b_L}^{c_L}|X_1,\cdots,X_l] $ with $c_1\geq \cdots \geq c_L\geq 1$ and $c_1+\cdots+c_L=k$. If $c_1\geq 2$ and $k=c_1+\cdots+c_L\leq 2L$, then since $L\leq k-1$, by the induction hypothesis,
\begin{align*}
\big\vert \mathbb{E} [\widetilde{X}_{b_1}^{c_1}\cdots \widetilde{X}_{b_L}^{c_L}|X_1,\cdots,X_l]\big\vert\leq   \frac{C_{l,c_1,\cdots,c_L}}{n^{L-(c_1+\cdots+c_L)/2}}=\frac{C_{l,c_1,\cdots,c_L}}{n^{L-k/2}}.
\end{align*}
Since the number of such terms is less than $n^L$, the sum of such terms is bounded by  $C_{l,c_1,\cdots,c_L} n^{k/2}$.

Whereas, if $k=c_1+\cdots+c_L \geq 2L+1$, then each term $|\mathbb{E} [\widetilde{X}_{b_1}^{c_1}\cdots \widetilde{X}_{b_L}^{c_L}|X_1,\cdots,X_l] |$ is bounded by 1, and thus sum of such terms is bounded by $n^L\leq n^{(k-1)/2}$.
Therefore, applying the aforementioned facts to \eqref{626}, one can conclude that the absolute value of the sum of terms {$\mathbb{E} [\widetilde{X}_{b_1}\cdots \widetilde{X}_{b_k}|X_1,\cdots,X_l] $}  is bounded by $O(n^{k/2})$. {Since there are $\Theta(n^k)$ many such terms, we obtain \eqref{624} by symmetry which implies they have the same values.}
\end{proof}

{
\begin{remark}
Note that in  Proposition \ref{prop 5.2}, we only obtained  higher order correlation estimates for edges not sharing a vertex.  The reason behind this is as follows.
Recall that FKG inequality, which ensures the non-negativity of pairwise-correlations, played a crucial role in obtaining the estimates \eqref{612} and \eqref{613} using the Gaussian concentration estimate \eqref{gaussian}
and the underlying symmetry. 
However,  in general, one cannot ensure the positivity of  higher order correlations just under a monotone measure. This is the main obstacle we face.

To elaborate on this, note first that the Gaussian concentration estimate yields the following for any $m\le M={n\choose{2}}$:
   \begin{align} \label{concentration}
   \mathbb{E} | \tilde{X}_1+ \cdots+\tilde{X}_m|^k \leq Cm^{k/2}.
   \end{align}
   Expanding the left hand side, we obtain various correlation terms of the form $\E(\tilde{X}^{a_1}_{i_1}\tilde{X}^{a_2}_{i_2}\ldots\tilde{X}^{a_\ell}_{i_\ell})$. Since they are not all guaranteed to be positive, apriori we cannot rule out large correlations with different signs across different monomials canceling each other out. 
   
Further, even terms corresponding to a particular type of a monomial are not expected to have the same value, and will depend on the graph structure induced by the edges $i_1, i_2, i_3, \ldots i_\ell$.   
   
This forces us to work with disjoint edges so that there is a single graph structure for any monomial. This means we can take $m$ to be at most $O(n).$ In this case, in \eqref{concentration}, the expansion yields $n^{k}$ terms of the form $\E(\tilde{X}_{i_1}\tilde{X}_{i_2}\ldots\tilde{X}_{i_k})$ and the right hand side is $O(n^{k/2}).$ This allows us, using induction and symmetry, to prove the $O(\frac{1}{n^{k/2}})$ bound on the $k-$point correlation. The above discussion also indicates that to improve on this, one needs to quantify the correlation depending on the graph structure induced by the corresponding edges, which is significantly more delicate. 
\end{remark}}

Given the above preparation we can now finish the proof of Theorem \ref{Theorem 3}.
\begin{proof} [Proof of Theorem \ref{Theorem 3}]
Let $S_m:= \frac{X_{i_1} + \cdots+X_{i_m}  -  \mathbb{E} [X_{i_1} + \cdots+X_{i_m} ]}{\sqrt{\text{Var}(X_{i_1} + \cdots+X_{i_m})}}$ and $\widetilde{X}_i:=X_i- \mathbb{E} X_i $. Throughout the proof, we fix a positive integer $k$, and let us compute the $k$-th moment of $\widetilde{X}_{i_1} + \cdots + \widetilde{X}_{i_m}$:
\begin{align*}
 \mathbb{E}\big[(\widetilde{X}_{i_1} + \cdots + \widetilde{X}_{i_m})^k\big] = \sum_{a_1,\cdots,a_m\geq 0, \ a_1+\cdots+a_m=k} {k \choose a_1,\cdots,a_m}  \mathbb{E} \widetilde{X}_{i_1}^{a_1}\cdots \widetilde{X}_{i_m}^{a_m}.
\end{align*}
For $b_1\geq b_2\geq \cdots \geq b_m\geq 0$ satisfying $b_1+\cdots+b_m=k$,
define $A(b_1,\cdots,b_m)$ to be a collection of $(a_1,\cdots,a_m)$s whose non-increasing rearrangement is $(b_1,\cdots,b_m)$.
 We claim that unless $$b_1=\cdots=b_{k/2}=2, b_{k/2 +1} = \cdots=b_m=0,$$ 
\begin{align} \label{633}
\sum_{(a_1,\cdots,a_m)\in A(b_1,\cdots,b_m)} { {k \choose a_1,\cdots,a_m} \Big\vert  \mathbb{E}\big[ \widetilde{X}_{i_1}^{a_1}\cdots \widetilde{X}_{i_m}^{a_m} \big] \Big\vert } = o(m^{k/2}).
\end{align}
Note that $k/2$ is not an integer for $k$ odd, which means that  \eqref{633} holds for any $b_1,\cdots,b_m$ when $k$ is odd.
Let $f(b_1,\cdots,b_m)$  be the number of non-zero values in $b_1,\cdots,b_m$.
According to \eqref{621}, if $f(b_1,\cdots,b_m)> k/2$, then for each $(a_1,\cdots,a_m)\in A(b_1,\cdots,b_m)$,
\begin{align*}
\Big\vert  \mathbb{E} \big[\widetilde{X}_{i_1}^{a_1}\cdots \widetilde{X}_{i_m}^{a_m}\big] \Big\vert = O\Big(\frac{1}{n^{f(b_1,\cdots,b_m)-k/2}}\Big).
\end{align*}
Since $|A(b_1,\cdots,b_m)| \leq m^{f(b_1,\cdots,b_m)}$ and $m=o(n)$, one can conclude that if $f(b_1,\cdots,b_m)> k/2$, then
\begin{align} \label{634}
\sum_{(a_1,\cdots,a_m)\in A(b_1,\cdots,b_m)} \Big\vert \mathbb{E}\big[ \widetilde{X}_{i_1}^{a_1}\cdots \widetilde{X}_{i_m}^{a_m}\big] \Big\vert = O\Big(\frac{m^{f(b_1,\cdots,b_m)}}{n^{f(b_1,\cdots,b_m)-k/2}}\Big) = o(m^{k/2}).
\end{align}
Since $\mathbb{E} |\widetilde{X}_{i_1}^{a_1}\cdots \widetilde{X}_{i_m}^{a_m}|\leq 1$, 
if $f(b_1,\cdots,b_m)<k/2$, then
\begin{align} \label{635}
\sum_{(a_1,\cdots,a_m)\in A(b_1,\cdots,b_m)} \Big\vert \mathbb{E}\big[ \widetilde{X}_{i_1}^{a_1}\cdots \widetilde{X}_{i_m}^{a_m}\big] \Big\vert = O(m^
{f(b_1,\cdots,b_m)}) = o(m^{k/2}).
\end{align}
In the case when $k$ is even and $f(b_1,\cdots,b_m)=k/2$, we prove that unless $b_1=\cdots=b_{k/2}=2$ and $b_{k/2 +1} =\cdots=b_m=0$, for $(a_1,\cdots,a_m)\in A(b_1,\cdots,b_m)$,
\begin{align} \label{636}
\Big\vert \mathbb{E} \big[\widetilde{X}_{i_1}^{a_1}\cdots \widetilde{X}_{i_m}^{a_m}\big] \Big\vert = O\Big(\frac{1}{n^{1/2}}\Big).
\end{align}
We have $b_1\geq 3$  unless $b_1=\cdots=b_{k/2}=2$ and $b_{k/2 +1} =\cdots=b_m=0$.
Without loss of generality, assume that $a_j=b_j$ for $j=1,2,\cdots,m$. Note that
\begin{align} \label{637}
 \mathbb{E}\big[ \widetilde{X}_{i_1}^{a_1}\cdots \widetilde{X}_{i_m}^{a_m}\big] =  \mathbb{E}  \big[\widetilde{X}_{i_1}^{a_1}  \mathbb{E}[\widetilde{X}_{i_2}^{a_2}\cdots\widetilde{X}_{i_m}^{a_m} | X_{i_1} ]\big].
\end{align}
Since $a_1+\cdots+a_m=k$ and $a_1\geq 3$, we have $a_2+\cdots+a_m \leq k-3$. Therefore, by \eqref{621}, we have 
\begin{align} \label{638}
\Big\vert  \mathbb{E} [\widetilde{X}_{i_2}^{a_2}\cdots\widetilde{X}_{i_m}^{a_m} | X_{i_1} ]  \Big\vert = O\Big(\frac{1}{n^{k/2 - 1 - (a_2+\cdots+a_m)/2}}\Big) = O\Big(\frac{1}{n^{1/2}}\Big).
\end{align}
 Therefore, \eqref{637}
 and \eqref{638} concludes the proof of  \eqref{636}. Using the fact $|A(b_1,\cdots,b_m)| \leq m^{f(b_1,\cdots,b_m)}$, \eqref{636} implies that  when $k$ is even and $f(b_1,\cdots,b_m)=k/2$,  unless $b_1=\cdots=b_{k/2}=2$ and $b_{k/2 +1} =\cdots=b_m=0$,
 \begin{align} \label{801}
 \sum_{(a_1,\cdots,a_m)\in A(b_1,\cdots,b_m)} \Big\vert  \mathbb{E}\big[ \widetilde{X}_{i_1}^{a_1}\cdots \widetilde{X}_{i_m}^{a_m}\big] \Big\vert =  O\Big(\frac{m^{k/2}}{n^{1/2}}\Big) = o(m^{k/2}).
 \end{align}
Since the multinomial coefficient ${k \choose a_1,\cdots,a_m} $ is always bounded by $k!$, we obtain \eqref{633} by \eqref{634}, \eqref{635}, and \eqref{801}.

Using \eqref{633}, one concludes that if $k$ is odd, then
\begin{align} \label{639}
{\Big\vert} \mathbb{E}[(\widetilde{X}_{i_1} + \cdots + \widetilde{X}_{i_m})^k] {\Big\vert} = o(m^{k/2}).
\end{align}
Whereas, if $k$ is even, then
\begin{align} \label{6300}
 \mathbb{E}[(\widetilde{X}_{i_1} + \cdots + \widetilde{X}_{i_m})^k] = o(m^{k/2}) + {k \choose 2,\cdots,2}  \sum_{j_1<\cdots<j_{k/2}}   \mathbb{E} \widetilde{X}_{i_{j_1}}^2\cdots \widetilde{X}_{i_{j_{k/2}}}^2.
\end{align}
Note that by  symmetry,
\begin{align} \label{6301}
\text{Var}(X_{i_1}+\cdots+X_{i_m}) = m\text{Var}(X_{i_1}) + m(m-1)\text{Cov}(X_{i_1},X_{i_2}).
\end{align}
Since the law of $X_{i_1}$ converges weakly to the Bernoulli distribution with a parameter $p^*$,
\begin{align*}
\lim_{n\rightarrow \infty}\text{Var}(X_{i_1})  = p^*(1-p^*).
\end{align*}
Recall that  $\text{Cov}(X_{i_1},X_{i_2}) = O(\frac{1}{n^2})$ since $i_1$ and $i_2$ do not share a vertex (see \eqref{612}). Thus, applying this to \eqref{6301}, 
\begin{align} \label{6304}
\lim_{n\rightarrow \infty} \frac{1}{m}\text{Var}(X_{i_1}+\cdots+X_{i_m}) =p^*(1-p^*).
\end{align}
Therefore, by \eqref{639} and \eqref{6304}, if $k$ is odd, then
\begin{align} \label{6303}
\lim_{n\rightarrow \infty}  \mathbb{E} S_m^k= 0.
\end{align}
Whereas, if $k$ is even, then by \eqref{6300} and symmetry,
\begin{align} \label{6305}
 \mathbb{E} S_m^k= o(1) + {k \choose 2,\cdots,2} {m \choose k/2} \frac{  \mathbb{E} \widetilde{X}_{i_1}^2\cdots \widetilde{X}_{i_{k/2}}^2}{(\text{Var}(X_{i_1}+\cdots+X_{i_m}) )^{k/2}}.
\end{align}
According to the asymptotic independence result \eqref{independent}, we have
\begin{align} \label{6306}
\lim_{n\rightarrow \infty}  \mathbb{E} \big[\widetilde{X}_{i_1}^2\cdots \widetilde{X}_{i_{k/2}}^2  \big]=  (p^*(1-p^*))^{k/2}.
\end{align}
Therefore, applying \eqref{6304} and \eqref{6306} to \eqref{6305}, one can conclude that for $k$ even,
\begin{align} \label{6302}
\lim_{n\rightarrow \infty}   \mathbb{E} S_m^k = (k-1)!!
\end{align}
Here, $(k-1)!!$ denotes the product of all numbers from 1 to $k-1$ that have the same parity as $k-1$. Thus,
\eqref{6303} and \eqref{6302} imply that the normalized sum $S_m$ weakly converges to the standard normal distribution, by, say, the classical Fr\'echet-Shohat theorem \cite{clt} along with the fact that the normal distribution is a  determinate measure, i.e., its moments determine it uniquely. 
\end{proof}

\section{Bounding $W_1$-Wasserstein distance: Proof of Theorem \ref{w1distance}}\label{section 9}
We start with the following related result about one point marginals.
\begin{proposition} \label{prop 7.3} For any edge $i$,
\begin{align} \label{640}
| \mathbb{E} X_i - p^* | = O\Big(\frac{\sqrt{\log n}}{\sqrt{n}}\Big).
\end{align}
\end{proposition}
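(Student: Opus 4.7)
The plan is to use stationarity of the Heat-bath Glauber dynamics under $\pi$ to write $q := \mathbb{E} X_i$ as an approximate fixed point of $\varphi_{\vec{\beta}}$, and then invoke the subcritical condition $\varphi_{\vec{\beta}}'(p^*)<1$ to convert the approximation into a quantitative bound on $|q - p^*|$. The exchangeability of $\pi$ under vertex permutations makes $q$ the same for every edge. By stationarity of GD and the form of the transition probabilities in \eqref{tranrates}, I would start from the identity
\[
q \;=\; \mathbb{E}_{\pi}\bigl[\Phi(\partial_e H(X))\bigr]
\]
for any edge $e$, where $\Phi$ is the sigmoid of \eqref{impfunction}.

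Next, I would control the fluctuations of $\partial_e H(X) = \sum_{j=1}^s \beta_j N_{G_j}(X,e)/n^{|V_j|-2}$. A combinatorial count of the discrete derivative $\partial_{ef}H(X)$, entirely analogous to the estimates carried out in \eqref{201}--\eqref{keyineq100}, shows that the Lipschitz vector $\textbf{v}$ of $\partial_e H$ satisfies $\|\textbf{v}\|_\infty = O(1/n)$ (with the maximum achieved on the $O(n)$ edges sharing a vertex with $e$, where $N_{G_j}(K_n,e,f) = O(n^{|V_j|-3})$) and $\|\textbf{v}\|_1 = O(1)$ (since the $O(n^2)$ disjoint edges contribute only $O(1/n^2)$ each). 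Theorem \ref{Theorem 2} then gives $\mathbb{P}(|\partial_e H(X) - \mu_e| > t) \leq 2 e^{-c n t^2}$, where $\mu_e := \mathbb{E}[\partial_e H(X)]$. Choosing $t = C\sqrt{\log n / n}$ and splitting the expectation on the event $\{|\partial_e H - \mu_e| \leq t\}$ and its complement—using $|\Phi'|\leq 1/4$ on the former and $|\Phi|\leq 1$ on the latter—yields $|q - \Phi(\mu_e)| = O(\sqrt{\log n/n})$.

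The third step identifies $\mu_e$ with $\Psi_{\vec{\beta}}(q)$ up to lower order. Expanding $N_{G_j}(X,e)$ as a sum over $(1+O(1/n))\cdot 2|E_j| n^{|V_j|-2}$ embeddings, each contributing a joint expectation of a product of $|E_j|-1$ distinct edge variables (the edge mapped to $e$ is forced to $1$), and invoking the quantitative independence bound \eqref{720} to replace each such expectation by $q^{|E_j|-1}+O(1/n)$, I would obtain
\[
\mu_e \;=\; \sum_{j=1}^s 2\beta_j |E_j|\, q^{|E_j|-1} \;+\; O(1/n) \;=\; \Psi_{\vec{\beta}}(q) + O(1/n).
\]
Combining with the previous step produces the approximate fixed-point equation $q = \varphi_{\vec{\beta}}(q) + O(\sqrt{\log n/n})$.

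Finally, I would convert this into the stated bound on $|q - p^*|$. Writing $g(x) := x - \varphi_{\vec{\beta}}(x)$, the subcritical condition gives $g'(p^*) = 1 - \varphi_{\vec{\beta}}'(p^*) > 0$. By the qualitative convergence $q\to p^*$ in Remark \ref{remark 6.3}, for large $n$ the value $q$ lies in a neighborhood of $p^*$ on which $g'$ is uniformly bounded below by a positive constant, so the mean value theorem delivers $|q - p^*| \leq C|g(q)| = O(\sqrt{\log n/n})$. The main obstacle I anticipate is the combinatorial bookkeeping in Step 2: handling the shared-vertex versus disjoint-edge cases carefully enough to nail down $\|\textbf{v}\|_\infty = O(1/n)$ and $\|\textbf{v}\|_1 = O(1)$, so that Theorem \ref{Theorem 2} is applied with the correct concentration scale (without which the argument would not close).
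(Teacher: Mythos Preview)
Your proposal is correct and follows essentially the same route as the paper: the stationarity identity $q=\mathbb{E}[\Phi(\partial_e H(X))]$, the Lipschitz bound $\|\textbf{v}\|_\infty=O(1/n)$, $\|\textbf{v}\|_1=O(1)$ on $\partial_e H$, concentration via Theorem \ref{Theorem 2}, the identification $\mu_e=\Psi_{\vec\beta}(q)+O(1/n)$ through \eqref{720}, and finally the inversion using $\varphi_{\vec\beta}'(p^*)<1$. The only cosmetic difference is in the last step: you invoke the qualitative convergence $q\to p^*$ from Remark \ref{remark 6.3} to localize before applying the mean value theorem, whereas the paper uses a compactness argument on $[0,1]$ (uniqueness of the root plus $f'(p^*)>0$ forces $|f(t)|\geq c|t-p^*|$ globally for small $|t-p^*|$) that does not require this prior input.
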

\begin{proof} By symmetry, let us consider the case $i=1$. Note that
\begin{align} \label{642}
\mathbb{E}  X_1 = \mathbb{E} \big[\mathbb{E}[X_1 | X_2,\cdots,X_M]\big] = \mathbb{E}   { \Phi(  \partial_1 H(X)  ) },
\end{align}
{where in the last inequality we used 
\begin{align*}
\mathbb{E}[X_1 | X_2,\cdots,X_M] = \mathbb{P}(X_1 = 1 | X_2,\cdots, X_M) =  \Phi(  \partial_1 H(X) ).
\end{align*}}
 The Lipschitz constant of $\partial_1 H(x)$ with respect to the edge $j$ is $O(\frac{1}{n^2})$ if $j$ does not share a vertex with the edge $1$ and $O(\frac{1}{n})$ otherwise. To see this,  note first that by \eqref{802},
 \begin{align*}
 \partial_1 H(x)  = \sum_{i=1}^s \beta_i \frac{  N_{G_i}(x,1)}{n^{|V_i|-2}}.
\end{align*}  
 As mentioned in the proof of  Proposition \ref{prop 4.3}, the quantity $N_{G_i}(x_{f+},1) - N_{G_i}(x_{f-},1) = N_{G_i}(x,1,f)$ is  $O(n^{|V_i|-4})$ if $f$ does not share a vertex with the edge $1$ and $O(n^{|V_i|-3})$ if $f$ shares a vertex with the edge $1$. Due to the normalization factor $n^{|V_i|-2}$, we have the asserted bound on the entries of the Lipschitz vector for $\partial_1 H(x)$.
 
Hence, its $L^1$ and $L^\infty$ norms are  $O(1)$ and $O(\frac{1}{n})$ respectively.  Thus, according to Theorem \ref{Theorem 2},  for some constant $c>0$, for all $\e>0$
\begin{align}  \label{641}
\mathbb{P}(|\partial_1 H(X) - \mathbb{E} \partial_1 H(X)|>\varepsilon) \le 2\exp(-cn\varepsilon^2)
\end{align}
For $K>0$ chosen later, let us denote
\begin{align*}
A_n:=\mathbb{E} \Big[  { \Phi(  \partial_I H(X)  ) }  : |\partial_1 H(X) - \mathbb{E} \partial_1 H(X)|\leq \frac{K}{\sqrt{n}}\Big], \\
B_n:=\mathbb{E} \Big[ { \Phi(  \partial_I H(X)  ) }  : |\partial_1 H(X) - \mathbb{E} \partial_1 H(X)|>\frac{K}{\sqrt{n}}\Big].
\end{align*}
Then, by \eqref{642}, we have
\begin{align} \label{643}
\mathbb{E}  X_1   = A_n+B_n.
\end{align}
{Since $\Phi(x) = \frac{e^x}{1+e^x}$ is non-negative and increasing,
\begin{align} \label{644}
 { \Phi\Big(   \mathbb{E} \partial_I H(X)  - \frac{K}{\sqrt{n}} \Big) }   
 \mathbb{P}\Big(|\partial_1 H(X) - \mathbb{E} \partial_1 H(X)|\leq \frac{K}{\sqrt{n}}\Big)   \leq A_n \leq  { \Phi\Big(   \mathbb{E} \partial_I H(X)  + \frac{K}{\sqrt{n}}\Big ) } .
\end{align}
Choosing $\e=\frac{K}{\sqrt n}$ in \eqref{641}, and   using the fact that $\Phi(x) \leq 1$,
\begin{align} \label{644}
 { \Phi\Big(   \mathbb{E} \partial_I H(X)  - \frac{K}{\sqrt{n}} \Big) }   
 - 2\exp(-cK^2)  \leq A_n \leq  { \Phi\Big(   \mathbb{E} \partial_I H(X)  + \frac{K}{\sqrt{n}}\Big ) }.
\end{align}
Next, using $\Phi(x)\le 1$ and  \eqref{641} again, we obtain: }
\begin{align} \label{645}
0\leq B_n \leq \mathbb{P}\Big(|\partial_1 H(X) - \mathbb{E} \partial_1 H(X)|>\frac{K}{\sqrt{n}}\Big)\le 2\exp(-cK^2).
\end{align}
Now letting $\mathbb{E} X_1 = a_n$, we express $\mathbb{E} \partial_1 H(X)$ in terms of $a_n$. {By \eqref{802},
\begin{align*}
\mathbb{E} \partial_1 H(X)  = \sum_{i=1}^s \beta_i \frac{\mathbb{E} N_{G_i}(X,1)}{n^{|V_i|-2}},
\end{align*}
and note that} by \eqref{111}, $N_{G_i}(X,1)$ is a sum of $2|E_i|(1+o(1))n^{|V_i|-2}$ many terms of the type $X_{l_1}X_{l_2}\cdots X_{l_{|E_i|-1}},$ with distinct edges $l_1,\cdots, l_{|E_i|-1}$.  Thus, using the definition of  $\Psi_{\vec{\beta}}$ from \eqref{phi}, by \eqref{720}, for some constant $C>0$ \begin{align} \label{646}
\Big\vert \mathbb{E} \partial_1 H(X) -  \Psi_{\vec{\beta}}(a_n)\Big\vert \leq  \frac{C}{n}.
\end{align}
Therefore,  applying \eqref{644}, \eqref{645}, and \eqref{646} to \eqref{643},
\begin{align*}
{ \Phi\Big(  \Psi_{\vec{\beta}}(a_n)-\frac{K}{\sqrt{n}} - \frac{C}{n} \Big ) }  -   {2\exp(-cK^2) }  \leq a_n \leq  { \Phi\Big(  \Psi_{\vec{\beta}}(a_n)+\frac{K}{\sqrt{n}} + \frac{C}{n} \Big ) }   + 2\exp(-cK^2).
\end{align*}
 Since one can check that ${\sup_{x\in \R} |\Phi'(x)| }\leq \frac{1}{4}$,
\begin{align} \label{647}  
-\frac{1}{4}\Big(\frac{K}{\sqrt{n}}+\frac{C}{n}\Big) -  { 2\exp(-cK^2) } \leq a_n- { \Phi (  \Psi_{\vec{\beta}}(a_n)  ) }  \leq \frac{1}{4} \Big(\frac{K}{\sqrt{n}} + \frac{C}{n}\Big) + 2\exp(-cK^2).
\end{align}
Recall that $p^*$ satisfies the fixed point equation
\begin{align} \label{648}
p^* =  { \Phi ( \Psi_{\vec{\beta}}(p^*))  =  }  \frac{\exp(\Psi_{\vec{\beta}}(p^*))}{1+\exp(\Psi_{\vec{\beta}}(p^*))}.
\end{align}

Denoting by $f$ the map $t\mapsto t - { \Phi ( \Psi_{\vec{\beta}}(t))}$, note that $f(0)<0$ and $f(1)>0.$
Further, since $\vec\beta$ is in the high temperature regime, the map $f$ has a unique root $p^*$. 
Also recall from \eqref{secondorder} that $f'(p^*)>d$ for some positive constant $d.$ 
Thus, owing to continuity of $f$ on the compact interval $[0,1],$ it follows that for all small enough $\e>0$,  for all $t\in[0,1]$ with $|t-p^*|\ge \e,$ we have $|f(t)|> \frac{d}{2}\e.$

Hence, by \eqref{647}, for all $K$ large enough,
\begin{align*}
|p^*-a_n|\leq \frac{2}{d}\Big(\frac{1}{4} \big(\frac{K}{\sqrt{n}} + \frac{C}{n}\big) + 2\exp(-cK^2)\Big).
\end{align*}
Taking $K=C'\sqrt{\log n}$ with a sufficiently large constant $C'>0$ in the above inequality,  we obtain \eqref{640}.
\end{proof}
Using the above, we now finish the proof of Theorem \ref{w1distance}. 
\begin{proof}[Proof of Theorem \ref{w1distance}]
{ Suppose that  two random variables $X$ and $Y$ are distributed as  ERGM($\vec\beta$)  and $G(n,p^*)$, respectively. In order to prove \eqref{103}, it suffices to construct a coupling of $(X,Y)$ such that
\begin{align} \label{104}
\mathbb{E}(d_H(X,Y)) \leq Cn^{3/2} \sqrt{\log n}.
\end{align} }
Consider two different GD $X(t)$ and $Y(t)$ associated with  ERGM($\vec\beta$) and $G(n,p^*)$, respectively.  Assume that $X(t)$ and $Y(t)$ start from  initial distributions $\pi = $ ERGM($\vec\beta$) and $\nu=G(n,p^*)$ so that each GD  is stationary. Let us couple the initial state $X(0)$ and $Y(0)$ in an arbitrary way. We then couple $X(t)$ and $Y(t)$ inductively in the following natural way: assume that $X(t)$-chain is at $x$ and $Y(t)$-chain is at $y$. Choose a coordinate $I$ uniformly at random, and pick two random variables $Z_1^I$ and $Z_2^I$ that minimizes the total variation distance $d_{TV}(\pi_I(\cdot|x),\nu_I(\cdot |y))$. Then,  $X(t+1)$ is obtained by replacing $x_I$ by $Z_1^I$ and similarly $Y(t+1)$ is the same as $y$ with $y_I$  replaced by $Z_2^I.$ Now note that given $X(t)$ and $Y(t)$, we have,
\begin{align} \label{920}
\mathbb{P}(X_i(t+1)\neq Y_i(t+1)) & \leq \Big(1-\frac{1}{M}\Big)\1\{X_i(t)\neq Y_i(t)\} + \frac{1}{M} d_{TV} (\pi_i(\cdot|X(t)),\nu_i(\cdot |Y(t))).
\end{align}
Since $p^* = { \Phi ( \Psi_{\vec{\beta}}(p^*))  }    $, by the mean value theorem and that ${\sup_{x\in \R} |\Phi'(x)| }\leq \frac{1}{4}$,
\begin{align} \label{927}
d_{TV} (\pi_i(\cdot|X(t)),\nu_i(\cdot |Y(t))) =  |{ \Phi ( \partial_i H(X(t)) )   }   - p^* |  \leq \frac{1}{4}|\partial_i H(X(t)) - \Psi_{\vec\beta}(p^*)|.
\end{align}
Since $X(t)$ is stationary,
by \eqref{646},
\begin{align*}
\Big\vert \mathbb{E}\partial_i H(X(t)) - \sum_{j=1}^s 2\beta_j |E_j| (\mathbb{E} X_1)^{|E_j|-1} \Big\vert  = O\Big(\frac{1}{n}\Big).
\end{align*}
Using this with Proposition \ref{prop 7.3}, we get, 
\begin{align}  \label{922}
\big\vert \mathbb{E}\partial_i H(X(t)) - \Psi_{\vec\beta}(p^*) \big\vert  = O\Big(\frac{\sqrt{\log n}}{\sqrt{n}}\Big).
\end{align}
Since $X(t)$ is  distributed as ERGM($\vec\beta$), recalling \eqref{641}, 
\begin{align} \label{923}
\mathbb{P} \Big(|\partial_i H(X(t)) -  \mathbb{E}\partial_i H(X(t))| > C\frac{\sqrt{\log n} }{\sqrt{n}}\Big)\leq  2 e^{-cn(C^2\log n /n)} =2  n^{-cC^2}.
\end{align}
By  \eqref{922} and \eqref{923}, for all large constant  $C_0>0$, there exists $C_1>c C_0^2/2$ such that
\begin{align} \label{926}
\mathbb{P} \Big(|\partial_i H(X(t)) - \Psi_{\vec\beta}(p^*)|> C_0 \frac{\sqrt{\log n} }{\sqrt{n}}\Big)\leq  n^{-C_1}.
\end{align}
Let us assume that $C_0$ is a sufficiently large constant so that $C_1>3$. By  \eqref{920} and \eqref{927}, given $X(t)$ and $Y(t)$, on the event  $|\partial_i H(X(t)) - \Psi_{\vec\beta}(p^*)|\leq  C_0 \frac{\sqrt{\log n} }{\sqrt{n}}$,
\begin{align} \label{924}
\mathbb{P}(X_i(t+1)\neq Y_i(t+1)) & \leq \Big(1-\frac{1}{M}\Big)\1(X_i(t)\neq Y_i(t)) +  \frac{C_0}{4M} \frac{\sqrt{\log n} }{\sqrt{n}}.
\end{align}
Whereas,  on the event  $|\partial_i H(X(t)) - \Psi_{\vec\beta}(p^*)|>  C_0 \frac{\sqrt{\log n} }{\sqrt{n}}$, we use the trivial bound
\begin{align} \label{925}
\mathbb{P}(X_i(t+1)\neq Y_i(t+1)) & \leq 1.
\end{align}
Thus, using \eqref{926}, \eqref{924}, and \eqref{925},
\begin{align*}
\mathbb{P}(X_i(t+1)\neq Y_i(t+1)) & \leq \Big(1-\frac{1}{M}\Big)\mathbb{P}(X_i(t)\neq Y_i(t)) +  \frac{C_0}{4M}\frac{\sqrt{\log n} }{\sqrt{n}} +n^{-C_1}.
\end{align*}
Recalling that $M=\frac{n(n-1)}{2}$ and $C_1>3$, there exists a constant $C>0$ such that for sufficiently large $n$, for $t\ge c'n^{5/2}$ with $c'>0$, (any $t$ suitably greater than $n^2$ will do),
\begin{align*}
\mathbb{P}(X_i(t)\neq Y_i(t)) \leq C\frac{\sqrt{\log n} }{\sqrt{n}}.
\end{align*}
Adding this over all edges $i$, we obtain
$
\mathbb{E}(d_H(X(t),Y(t))) \leq Cn^{3/2} \sqrt{\log n}.
$ Since $X(t)$ and $Y(t)$ are  distributed as ERGM($\vec\beta$) and $G(n,p^*)$ respectively,  we obtain \eqref{103}.
\end{proof}

{
\begin{remark}
As the reader might notice, in the above proof,
instead of choosing edges uniformly to update, one can also perform a  sequential update of all the edges. More precisely, ordering the edges in an arbitrary fashion labelling them $1,\ldots M$, let $(X(t))_{t=0,1,\cdots,M}$ be a Markov chain starting from the initial distribution  $X(0)=$ERGM($\vec\beta$) such that  given $X(t-1)$, $X(t)$ is obtained by resampling the $t^{th}$ edge, according to the  conditional distribution given other edges. Similarly, define the corresponding Markov chain $Y(t)_{t=0,1,\cdots,M}$ for $G(n,p^*)$.  It is obvious that both $X(t)$ and $Y(t)$ are stationary. Now consider the natural coupling of the two Markov chains by taking an arbitrary coupling $(X(0),Y(0))$ and at each time $t\geq 1$, given $(X(t-1), Y(t-1)),$ taking an optimal coupling for $X_t(t)$ and $Y_t(t)$  that attains the total variation distance. Then, by \eqref{927} and \eqref{926},
\begin{align*}
\mathbb{P}(X_{t+1}(t+1)\neq Y_{t+1}(t+1)) =d_{TV} (\pi_t(\cdot|X(t)),\nu_t(\cdot |Y(t))) \leq \frac{C_0}{4}\frac{\sqrt{\log n} }{\sqrt{n}} +n^{-C_1}.
\end{align*}
Adding this over $t=1,2,\cdots,M$, since $C_1>3$, we have
\begin{align*}
\mathbb{E}(d_H(X(M),Y(M))) &= \sum_{t=1}^M \mathbb{P}(X_t(M)\neq Y_t(M))  \\
&= \sum_{t=1}^M \mathbb{P}(X_t(t)\neq Y_t(t))  \leq  M\Big(\frac{C_0}{4}\frac{\sqrt{\log n} }{\sqrt{n}} +n^{-C_1}\Big) = O(n^{3/2}\sqrt{\log n}).
\end{align*}
Since two chains $X(t)$ and $Y(t)$ are stationary, this concludes the proof.
\end{remark}
}

\subsection{Pseudo-likelihood estimators}\label{pseudo-section}Being a central object in the study of statistical models on networks, a natural question about the ERGM, is whether one can deduce any meaningful estimate of the parameter $\vec{\beta}$ given a single realization of the graph. 
A well known estimator in such contexts is the so-called Maximum Likelihood Estimator (MLE), which as the name suggests, are the values of the parameter that maximizes the probability of the realized sample. However often in high dimensional models such as the ERGM, the optimization problem associated to finding the MLE, neither has a closed form solution, nor is computationally feasible. In such settings a related proxy is the notion of a \it{pseudo-likelihood estimator}. This was was  first introduced by Besag \cite{b1,b2} in the context of analyzing data possessing spatial dependence. To define this precisely, consider a random vector $X=(X_1,\cdots,X_n)$ with probability density function $f(\beta,X),$ parametrized by a parameter $\beta$. Define $f_i(\beta,x)$ to be a conditional probability density of $X_i$ given $(X_j)_{j\neq i}$. The \it{maximum pseudo-likelihood estimator} (MPLE) of $\beta$ is defined by
\begin{align*}
\hat{\beta}_{\text{MPLE}}:=\argmax_\beta \prod_{i=1}^n f_i(\beta,X).
\end{align*}
In practise, this is often much simpler to analyze than the MLE problem,
\begin{align*}
\hat{\beta}_{\text{MLE}}:=\argmax_\beta f(\beta,X).
\end{align*}
A sequence $\{\hat{\beta}_{n}\}_n$ is said to be a \it{consistent} sequence of estimators for $\beta$ provided that for any $\varepsilon>0$,
\begin{align*}
\lim_{n\rightarrow \infty} \mathbb{P}(|\hat{\beta}_n - \beta| > \varepsilon) = 0.
\end{align*}

While studying the consistency of MLE in rather general settings has been an important classical theme in probability theory and statistics, more recently, there has been a parallel interest in understanding the MPLEs for various statistical mechanics models, in particular for spin systems. One of the most notable results was proved by Chatterjee \cite{c3} where among various things, it was proved  that the MPLE in the
Sherrington-Kirkpatric (S-K) model \cite{sk} and the Hopfield model \cite{h} with a single parameter $\beta>0$ is consistent  for $\beta$, whereas, consistency does not hold in the high temperature 
Curie-Weiss model. In fact,  if the inverse temperature $\beta$ satisfies $0\leq \beta<1$, then $\hat{\beta}_n \rightarrow 1$ in probability (see \cite[Section 1.7]{c3} for details).
Later, similar problems for the 
Ising model  on regular graphs and on Erd\H{o}s-R\'enyi graphs were studied in \cite{bm} and more recently in \cite{gm}.

While such a study for the ERGM has not yet appeared in the  literature, the consistency of the MPLE for the ERGM with a single parameter $\beta$ in the whole positive temperature regime $\beta>0$ is expected to follow using the robust  arguments developed in \cite{c3}. 
On the other hand, it seems that a general theory of the consistency of MPLE with several parameters has not been developed yet, except in some special cases (see {\cite{gm,ddp}}). 
In fact,
the problem seems to be ill-posed in the case of the ERGM, since, in the high temperature case,  {using the concentration result Theorem \ref{Theorem 2}, with high probability, the critical equation corresponding to finding $\hat{\vec{\beta}}$, the MPLE for $\vec\beta$,
seems to only say that $p^* \approx \Psi_{\hat{\vec\beta}}(p^*),$ and hence does not provide any further information about the vector $\vec \beta.$
This might lead one to wonder for any parameters $\vec\beta$ and $\vec \gamma$ with $\varphi_{\vec\beta}(p^*)=p^*=\varphi_{\vec\gamma}(p^*)$, since both ERGM($\vec\beta$) and  ERGM($\vec\gamma$) behave like $G(n,p^*)$, if they are contiguous. 
Nonetheless, while in light of the above, the MPLE is unable to estimate the parameters, as was mentioned earlier, in \cite{mukherjee1} a consistent estimator was obtained for the parameters in the special case of the two-star model, ruling out the possibility of such a contiguity result, at least in this case.

\bibliography{exponential}
\bibliographystyle{plain}

\end{document}